\documentclass[final,a4paper,11pt]{article}
\usepackage{graphicx}
\usepackage{mathptmx}
\usepackage{amssymb,amsmath, amsfonts,amsthm}
\usepackage{a4wide}
\usepackage{color}

\usepackage{hyperref}

\newcommand{\email}[1]{{\tt #1}}
\newcommand{\R}{\mathbb{R}}
\newcommand{\oR}{\overline{\R}}

\newcommand{\norm}[1]{\|#1\|}

\newcommand{\dist}[1]{{\rm dist}(#1)}

\newcommand{\mv}{\,\mid\,}

\newcommand{\B}{{\cal B}}

\newcommand{\K}{{\cal K}}
\newcommand{\M}{{\cal M}}

\newcommand{\V}{{\cal V}}
\newcommand{\U}{{\cal U}}
\newcommand{\Sp}{{\mathcal S}}
\newcommand{\F}{{\cal F}}
\newcommand{\W}{{\cal W}}
\newcommand{\X}{{\cal X}}
\newcommand{\Y}{{\cal Y}}
\newcommand{\Z}{{\cal Z}}
\newcommand{\lag}{{\cal L}}
\newcommand{\setto}[1]{\mathop{\rightarrow}\limits^#1}

\newcommand{\longsetto}[1]{\mathop{\longrightarrow}\limits^{#1}}

\newcommand{\skalp}[1]{\langle #1\rangle}
\newcommand{\bskalp}[1]{\big\langle #1\big\rangle}

\newcommand{\xb}{{\bar x}}
\newcommand{\yb}{{\bar y}}
\newcommand{\zb}{{\bar z}}

\newcommand{\bb}{{\bar b}}
\newcommand{\AT}[2]{{\textstyle{#1\atop#2}}}
\newcommand{\xba}{{\bar x^\ast}}
\newcommand{\yba}{{\bar y^\ast}}
\newcommand{\zba}{{\bar z^\ast}}

\newcommand{\aba}{{\bar a^\ast}}
\newcommand{\oo}{o}
\newcommand{\OO}{{\cal O}}

\newcommand{\argmin}{\mathop{\rm arg\,min}}

\newcommand{\lin}{{\rm lin\,}}

\newcommand{\ri}{{\rm ri\,}}

\newcommand{\co}{{\rm conv\,}}
\newcommand{\gph}{\mathrm{gph}\,}

\newcommand{\dom}{\mathrm{dom}\,}

\newcommand{\tto}{\rightrightarrows}
\newcommand{\onabla}{\overline\nabla}
\newcommand{\Limsup}{\mathop{{\rm Lim}\,{\rm sup}}}
\newcommand{\Liminf}{\mathop{{\rm Lim}\,{\rm inf}}}
\newcommand{\Lim}{\mathop{{\rm Lim}}}
\newcommand{\myvec}[1]{\begin{pmatrix}#1\end{pmatrix}}

\newcommand{\prox}[1]{{\rm prox}_{#1}}

\newcommand{\rge}{{\rm rge\;}}
\renewcommand{\ker}{{\rm ker\;}}

\newcommand{\lsc}{l.s.c. }
\newcommand{\SKKT}{S_{\rm KKT}}

\newlength{\myAlgBox}
\newtheorem{theorem}{Theorem}[section]
\newtheorem{proposition}[theorem]{Proposition}
\newtheorem{remark}[theorem]{Remark}
\newtheorem{lemma}[theorem]{Lemma}

\newtheorem{definition}[theorem]{Definition}
\newtheorem{example}[theorem]{Example}

\newtheorem{assumption}{Assumption}

\title{Characterizations of the Aubin property of the KKT-mapping in composite optimization by SC derivatives and quadratic bundles}
\author{Helmut Gfrerer\thanks{Johann Radon Institute for Computational and Applied Mathematics (RICAM), A-4040 Linz, Austria and Institute of Information Theory and Automation, Czech Academy of
Sciences, 18208 Prague, Czech Republic; \email{helmut.gfrerer@ricam.oeaw.ac.at}}
\and   Ji\v{r}\'{i} V. Outrata\thanks{Institute of Information Theory and Automation, Czech Academy of
 Sciences, 18208 Prague, Czech Republic,  \email{outrata@utia.cas.cz}}
}

\date{}

\begin{document}
\maketitle
\begin{abstract}
  For general set-valued mappings, the Aubin property is ultimately tied to limiting coderivatives by the Mordukhovich criterion. Likewise, the existence of single-valued Lipschitzian  localizations is related to strict graphical derivatives. In this paper we will show that for the special case of the KKT-mapping from composite optimization, the Aubin property and the existence of single-valued Lipschitzian localizations can be characterized by SC derivatives and quadratic bundles, respectively, which are easier accessible than limiting coderivatives and strict graphical derivatives.
\end{abstract}
{\bf Key words.} Composite optimization, Aubin property, KKT-mapping, generalized derivatives, second-order qualification condition.\\
{\bf MSC codes.} 49J52, 49J53, 90C26.

\section{Introduction}
Consider the composite optimization problem
\begin{equation}\label{EqOptProbl}\min \varphi(x):=f(x)+g(F(x)),\end{equation}
where $f:\R^n\to\R$ and $F:\R^n\to\R^m$ are twice continuously differentiable and $g:\R^m\to\oR:=\R\cup\{\infty\}$ is a  lower semicontinuous (\lsc) function satisfying $\dom g:=\{y\in\R^m\mv g(y)<\infty\}\not =\emptyset$.

The problem \eqref{EqOptProbl} is a rather general problem. If one takes $g$ to be the indicator function $\delta_K$ of a closed convex set $K$, the term $g(F(x))$ corresponds to a constraint $F(x)\in K$. In particular, in the special case
\[K=\{y\in\R^m\mv y_i=0,\ i=1,\ldots,l,\ y_i\leq 0,\ i=l+1,\ldots,m\}\]
we recover the {\em nonlinear programming problem}. Other instances of \eqref{EqOptProbl} are {\em second-order cone programming} and {semidefinite programming} when $K$ is the second-order cone and the cone of positive semidefinite matrices, respectively. In this paper we do not assume such special structures and deal with very general \lsc convex functions $g$.

Given a local minimizer $\xb$ of \eqref{EqOptProbl}, the first-order optimality conditions (KKT-conditions) read as
\[0=\nabla_x\lag(x,y^*),\ y^*\in \partial g(F(x)),\]
where the Lagrangian $\lag:\R^n\times\R^m\to\R$ is defined by
\[\lag(x,y^*):=f(x)+\skalp{y^*,F(x)},\ (x,y^*)\in\R^n\times\R^m.\]
We consider the {\em canonically perturbed} version of \eqref{EqOptProbl} given by
\begin{equation}\label{EqPertOptProbl}
  \min f(x)-\skalp{a^*,x}+g(F(x)+b)
\end{equation}
with perturbation parameters $a^*\in\R^n$ and $b\in\R^m$. The corresponding KKT system reads as
\begin{equation}\label{EqKKTpert}
0= \nabla_x\lag(x,y^*)-a^*, y^*\in\partial g(F(x)+b)
\end{equation}
and we are interested in properties of its solution mapping
\begin{equation}
  \SKKT(a^*,b):=\{(x,y^*)\in\R^n\times\R^m\mv (x,y^*,a^*,b) \mbox{ fulfills }\eqref{EqKKTpert}\}
\end{equation}
for perturbation parameters $(a^*,b)$ close to the reference parameter
\begin{equation}
  (\aba,\bb):=(0,0).
\end{equation}
Various stability properties for solution mappings of parameter dependent generalized equations and optimization problems have been considered in the literature, we refer her only to the monographs \cite{BonSh00, DoRo14, KlKum02, Mo06a, Mo06b, Mo24} and the references therein.
 In this paper we pay attention to the so-called Aubin property and the existence of single valued Lipschitzian localizations of the KKT-mapping $\SKKT$. In particular, we want to analyze {\em full primal-dual stability} of a pair $(\xb,\yba)\in\SKKT(\aba,\bb)$ as recently introduced by Benko and Rockafellar \cite{BeRo24}. Let us define the following mappings associated with optimal solutions of the perturbed optimization problem \eqref{EqPertOptProbl} by
\begin{align}
  &S_{\rm Opt}^\delta(a^*,b):=\argmin_{\norm{x-\xb}\leq\delta}\big\{f(x)-\skalp{a^*,x}+g(F(x)+b)\},\\
  &\widehat S_{\rm Opt}^\delta(a^*,b):=\{(x,y^*)\in \SKKT(a^*,b)\mv x\in S_{\rm Opt}^\delta(a^*,b), \norm{y^*-\yba}\leq\delta\}.
\end{align}
\begin{definition}\label{DefPrimalDualStability}
  The primal--dual pair $(\xb,\yba)$ is {\em fully stable}
in problem \eqref{EqOptProbl} if there is a neighborhood $\U^*\times\V$ of $(\aba,\bb)$ such that, for $\delta>0$ sufficiently small,
the mapping $\widehat S_{\rm Opt}^\delta$ is single-valued and Lipschitz continuous on $\U^*\times\V$.
\end{definition}
At the very general level, characterization of stability properties of set-valued mappings can be stated by generalized derivatives.
By the celebrated Mordukhovich criterion, the Aubin property   can be smartly characterized in terms of limiting coderivatives. Though a lot of calculus rules for coderivatives are available, it can be hard or even an unsurmountable hurdle to compute the exact limiting coderivative. Often the calculus rules only yield an inclusion resulting in something which is only sufficient for the Aubin property but not necessary.

Likewise, the existence of a single-valued Lipschitzian localization is associated with strict graphical derivatives as demonstrated by Kummer's inverse mapping theorem \cite{Kum91}, see also \cite[Theorem 9.54]{RoWe98}, \cite[Section 3]{Ro25_prepr} for the general case. Similarly to limiting coderivatives, the calculation of strict graphical derivatives is an extremely difficult task which impairs their usefulness.

The objective of this paper is to show that the Aubin property of  $\SKKT$ and full primal-dual stability, respectively, can be characterized by {\em subspace containing  derivatives (SC derivatives)} of the subdifferential mapping $\partial g$ and Rockafellar's {\em quadratic bundle} \cite{Ro23a} for $g$, respectively, which are easier accessible than limiting coderivatives and strict graphical derivatives as discussed  at the end of Subsection \ref{SubsecSCD_quad}.
SC derivatives were  introduced very recently by the authors \cite{GfrOut22} in order to provide a theoretical framework for the efficient implementation of the semismooth* Newton method from \cite{GfrOut21}. Apart from this numerically  motivated approach, it already turned out in \cite{GfrOut22, GfrOut23} that SC derivatives are a useful tool for analyzing stability properties of set-valued mappings. They  represent a first-order generalized derivative for a certain class of set-valued mappings including the subdifferential mapping of \lsc prox-regular and subdifferentially continuous functions.
On the other hand, quadratic bundles are based on epigraphical limits of certain second subderivatives of such functions. Both notions are linked by a one-to-one correspondence and therefore all results can be  formulated either by SC derivatives or by quadratic bundles in an equivalent way.

At the core of our analysis there is a result that a certain second order qualification condition \cite[Equation (3.15)]{MoRo12}, expressed in terms of limiting coderivatives, can be equivalently formulated using SC derivatives, quadratic bundles and strict graphical derivatives, respectively. Then we can show, under the assumption that the SC derivative of $\partial g$ is a singleton, that the Aubin property of $\SKKT$ is  equivalent to the existence of a single-valued Lipschitzian localization as well as to the simultaneous fulfillment of the second order qualification condition and another second-order condition involving the SC derivative. Moreover, we can show that the single-valued localization is not only Lipschitzian but also strictly differentiable and compute its derivative. These results are based on characterizations of strict proto-differentiability from the very recent works \cite{Gfr25c,GfrOut25_prepr}.

In case of local minimizers we will show, under the assumption that a condition necessary for variational sufficiency is fulfilled, that the Aubin property of $\SKKT$ is  equivalent to the existence of a single-valued Lipschitzian localization as well as to strong variational sufficiency and full primal-dual stability. These properties  can be characterized by the second-order qualification condition and a condition on SC derivatives/quadratic bundles ensuring strong variational sufficiency. Here we apply results from Benko and Rockafellar \cite{BeRo24} and Rockafellar's characterization of strong variational sufficiency via quadratic bundles \cite{Ro23a}. Finally,  under the assumption that a certain chain rule is fulfilled, we will show that in the characterizations above the requirement of strong variational sufficiency can be replaced by the property that the reference point $\xb$ is a tilt-stable local minimizer.

The paper is organized as follows. In Section 2 we recall all the basic definitions and properties used in this paper. Section 3 is devoted to the second-order qualification condition and its consequences. Finally, in Section 4 we state our results on the Aubin property of $\SKKT$ and the full primal-dual stability.

The following notation is employed. For an element $x\in\R^n$, $\norm{x}$ denotes its Euclidean norm and $\B(x,\delta)$ denotes the closed ball around $x$ with radius $\delta$, whereas $\B$ stands for the closed unit ball. In a product space we use the norm $\norm{(u,v)}:=\sqrt{\norm{u}^2+\norm{v}^2}$. Given
an $m\times n$ matrix $A$, we employ the operator norm $\norm{A}$ with respect to the Euclidean norm and we denote the range of $A$ by $\rge A$ and the Moore-Penrose inverse of $A$ by $A^\dag$. For two $m\times n$ matrices $A,B$, we set $\rge(A,B)=\{(Ap,Bp)\mv p\in\R^n\}\subset\R^m\times\R^m$. When a function $f:\R^n\to\R$ is differentiable at $x\in\R^n$ we denote by $\nabla f(x)$ its gradient and for a mapping $F:\R^n\to\R^m$ the notation $\nabla F(x)$ is used to denote its Jacobian. For twice differentiable functions $f:\R^n\to \R$ we denote by $\nabla^2f(x)$ the Hessian of $f$ at $x$.

 Given a linear subspace $L\subseteq \R^n$, $ L^\perp$ denotes its orthogonal
complement and, for a closed cone $K$ with vertex at the origin, $K^\circ$ signifies its (negative) polar.
Further, given a multifunction $F:\R^n\tto\R^m$, $\gph F:=\{(x,y)\mv y\in F(x)\}$ stands for its
graph, its inverse is given by $F^{-1}(y):=\{x\in\R^n\mv y\in F(x)\}$, $y\in\R^m$ and $\ker F$ denotes the set $\ker F:=\{x\mv 0\in F(x)\}$.   Given a set $\Omega\subset\R^s$, we define the distance of a point $x$ to $\Omega$ by $d_\Omega(x):=\dist{x,\Omega}:=\inf\{\norm{y-x}\mv y\in\Omega\}$ and the indicator function is denoted by $\delta_\Omega$. The notation $x_k\setto{\Omega} \xb$ means that the sequence $x_k$ converges to $\xb$ and $x_k\in\Omega$ $\forall k$.

\section{Preliminaries}
At the beginning let us recall set convergence in the sense of Painlev\'e--Kuratowski.
Given a parameterized family $C_t$ of subsets of a metric space, where $t$ also belongs to a metric space, the upper (outer) and lower(inner) limits are given by
\begin{align*}
\Limsup_{t\to\bar t}C_t:=\{x\mv \liminf_{t\to\bar t}\,\dist{x,C_t}=0\},\
\Liminf_{t\to\bar t}C_t:=\{x\mv \limsup_{t\to\bar t}\,\dist{x,C_t}=0\}.
\end{align*}
The limit of the sequence exists if the outer and inner limit sets are equal and we set
\[\Lim_{t\to\bar t}C_t:=\Limsup_{t\to\bar t}C_t=\Liminf_{t\to\bar t}C_t.\]
\subsection{Subdifferentials and second subderivatives}
Given an extended real-valued function $\psi:\R^n\to\oR$ and  a point $\xb\in \dom  \psi$, 
the {\em regular subdifferential} of $\psi$ at $\xb$ is given by
\[\widehat\partial  \psi(\xb):=\Big\{x^*\in\R^n\mv\liminf_{x\to\xb}\frac{ \psi(x)- \psi(\xb)-\skalp{x^*,x-\xb}}{\norm{x-\xb}}\geq 0\Big\},\]
while the {\em (limiting) subdifferential} is defined by
\[\partial  \psi(\xb):=\{x^*\mv \exists x_k\to \xb, x_k^* \to x^* \mbox{ with }\psi(x_k)\to\psi(x)\mbox{ and } x_k^*\in\widehat \partial  \psi(x_k)\ \forall k\}.\]
The {\em singular (horizon) subgradient set} is given by
\[\partial^\infty\psi(x)=\{x^*\mv \exists t_k\downarrow 0,x_k\to \xb, x_k^* \in\widehat\partial \psi(x_k) \mbox{ with }\psi(x_k)\to\psi(x)\mbox{ and } t_kx_k^*\to x^*\}.\]
An \lsc function $\psi:\R^n\to\oR$ is called {\em prox-regular} at $\xb$ for $\xba$
 if $\psi$ is finite  at $\xb$ with $\xba\in\partial \psi(\xb)$ and  there
exist $\epsilon> 0$ and $r\geq 0$ such that
\[ \psi(x')\geq  \psi(x)+\skalp{x^*,x'-x}-\frac r2 \norm{x'-x}^2\]
 whenever $x',x\in \B(\xb,\epsilon)$, $x^*\in \B(\xba,\epsilon)$ and $\psi(x)<\psi(\xb)+\epsilon$. When this holds for all $\xba\in\partial\psi(\xb)$, $\psi$ is called to be prox-regular at $\xb$.

Further, $\psi$ is called {\em sub\-differen\-tially continuous} at $\xb$ for $\xba$ if for any sequence $(x_k,x_k^*)\longsetto{{\gph \partial  \psi}}(\xb,\xba)$ we have $\lim_{k\to\infty} \psi(x_k)= \psi(\xb)$. When this holds for all $\xba\in\partial\psi(\xb)$, $\psi$ is called to be subdifferentially continuous at $\xb$.

Recall that an \lsc convex function is prox-regular and subdifferentially continuous on its whole domain.

Define the parametric family of second-order difference quotients for $\psi$ at $\xb$ for $\xba\in\R^n$ by
\[\Delta_t^2\psi(\xb,\xba)(u):=\frac{\psi(\xb+tu)-\psi(\xb)-t\skalp{\xba,u}}{\frac 12 t^2}\quad\mbox{ with }u\in\R^n,\ t>0.\]
If $\psi(\xb)$ is finite, then the {\em second subderivative} of $\psi$ at $\xb$ for $\xba$ is given by
\[{\rm d^2}\psi(\xb,\xba)(u)=\liminf_{\AT{t\downarrow 0}{u'\to u}}\Delta_t^2\psi(\xb,\xba)(u').\]
$\psi$ is called {\em twice epi-differentiable} at $\xb$ for $\xba$, if the functions $\Delta_t^2\psi(\xb,\xba)$ epi-converge to ${\rm d^2}\psi(\xb,\xba)$ as $t\downarrow 0$.

\subsection{Set-valued mappings and generalized differentiation}
Consider a set $\Omega\subset\R^n$ and a point $\zb\in\Omega$.
 The {\em tangent cone} and the {\em paratingent cone}  to $\Omega$ at $\zb$ are given by
\begin{gather*}
T_\Omega(\zb):=\Limsup_{t\downarrow 0}\frac{\Omega-\zb}t,\quad
T^P_\Omega(\zb):= \Limsup_{\AT{z\setto{\Omega}\zb}{t\downarrow 0}}\frac{\Omega-z}t
\end{gather*}
and the {\em regular normal cone} and the {\em limiting normal cone} to $\Omega$ at $\zb$ are defined as 
\begin{gather*}
\widehat N_\Omega(\zb):=\big(T_\Omega(\zb)\big)^\circ,\quad
N_\Omega(\zb):=\Limsup_{z\setto{\Omega}\zb}\widehat N_\Omega(z).
\end{gather*}

We now proceed with generalized derivatives of set-valued mappings.
Let $F:\R^n\tto\R^m$ be a mapping and let $(\xb,\yb)\in\gph F$.
 The {\em strict derivative} $D_*F(\xb,\yb):\R^n\tto\R^m$, the {\em graphical derivative} $DF(\xb,\yb):\R^n\tto\R^m$ and the {\em limiting (Mordukhovich) coderivative} $D^*F(\xb,\yb):\R^n\tto\R^m$ at $\xb$ for $\yb$ are given by
        \begin{gather*}\gph D_*F(\xb,\yb)=T_{\gph F}^P(\xb,\yb),\quad \gph DF(\xb,\yb)=T_{\gph F}(\xb,\yb),\\
         \gph D^*F(\xb,\yb)=\{(y^*,x^*)\mv (x^*,-y^*)\in N_{\gph F}(\xb,\yb)\}.
         \end{gather*}

These generalized derivatives may be used to characterize several regularity properties of set-valued mappings.   Given $F:\R^n\tto\R^m$ and a point $(\xb,\yb)\in\gph F$, the mapping  $F$ is said to be {\em metrically regular around} $(\xb,\yb)$ if there is $\kappa\geq 0$ together with neighborhoods $\X$ of $\xb$ and $\Y$ of $\yb$ such that
      \begin{equation}\label{EqMetrReg}
      \dist{x,F^{-1}(y)}\leq \kappa\dist{y,F(x)}\ \forall (x,y)\in \X\times \Y.
    \end{equation}
$F$ is said to be {\em strongly metrically regular around} $(\xb,\yb)$ if it is metrically regular around $(\xb,\yb)$ and $F^{-1}$ has a single-valued graphical localization around $(\yb,\xb)$, i.e., there are  open neighborhoods $\Y'$ of $\yb$, $\X'$ of $\xb$ and a mapping $h:\Y'\to\R^n$ with $h(\yb)=\xb$ such that $\gph F\cap (\X'\times \Y')=\{(h(y),y)\mv y\in \Y'\}$.

It is well-known that $F$ is metrically regular around $(\xb,\yb)$ if and only if the inverse mapping $S=F^{-1}$  has the so-called {\em Aubin property} around $(\yb,\xb)$, i.e., there is $\kappa\geq 0$ together with neighborhoods $\Y$ of $\yb$ and $\X$ of $\xb$ such that
      \begin{equation}\label{EqAubin}
      S(y)\cap \X\subset \kappa\norm{y-y'}\B +S(y')\ \forall y,y'\in \Y.
    \end{equation}
For single-valued mappings $S:\R^m\to\R^n$, the Aubin property of $S$ around $(\yb,S(\yb))$ is the same as Lipschitz continuity of $S$ near $\yb$.
Therefore, $F$ is strongly metrically regular around $(\xb,\yb)$ if and only if $F^{-1}$ has a single-valued graphical localization around $(\yb,\yb)$ which is Lipschitz continuous.

In this paper we will use the following characterizations of metric regularity and the existence of single-valued Lipschitzian localizations.
\begin{theorem}\label{ThMordRoCrit}
  Consider a multifunction  $F:\R^n\tto\R^m$ and assume that $\gph F$ is locally closed around $(\xb,\yb)\in\gph F$.
  \begin{enumerate}
    \item[(i)] (Mordukhovich criterion, cf. \cite[Theorem 3.3]{Mo18}) $F$ is metrically regular around $(\xb,\yb)$ if and only if
      \begin{equation}
            \label{EqMoCrit} 0\in D^*F(\xb,\yb)(y^*)\ \Rightarrow\ y^*=0.
      \end{equation}
    \item[(ii)] (cf. Rockafellar \cite[Proposition 3.1]{Ro25_prepr}.) $F^{-1}$ has a graphical localization around $(\yb,\xb)$ which is single-valued and Lipschitz continuous relative to its domain if and only if
        \begin{equation}\label{EqRoStrictDerivCrit}D_*F(\xb,\yb)^{-1}(0)=\{0\}.\end{equation}
    \end{enumerate}
\end{theorem}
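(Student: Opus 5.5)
Both parts of Theorem~\ref{ThMordRoCrit} are cited from the literature, so the plan is to recall the standard arguments rather than build new machinery. Part (i) is the Mordukhovich criterion in finite dimensions.

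For part (i), necessity goes by contradiction. Assume $F$ is metrically regular around $(\xb,\yb)$ with modulus $\kappa$. Take $(0,-y^*)\in N_{\gph F}(\xb,\yb)$, realized as a limit of regular normals $(x_k^*,-y_k^*)\in\widehat N_{\gph F}(x_k,y_k)$. For each $k$, plug points $(x,y)$ produced by \eqref{EqMetrReg} into the regular normal inequality. This yields $\norm{y_k^*}\le \kappa\norm{x_k^*}+o(1)$, and passing to the limit forces $y^*=0$.

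For sufficiency I argue contrapositively. If metric regularity fails, there are $(x_k,y_k)\to(\xb,\yb)$ with $\dist{x_k,F^{-1}(y_k)}> k\,\dist{y_k,F(x_k)}$. I apply Ekeland's variational principle to the function $(x,y)\mapsto\norm{y-y_k}+\delta_{\gph F}(x,y)$, starting near a point of $F(x_k)$, with a suitably small perturbation parameter. The fuzzy sum rule for regular subdifferentials then produces regular normals $(x_k^*,-y_k^*)$ to $\gph F$ at nearby points with $\norm{y_k^*}=1$ and $\norm{x_k^*}\le 1/k$. Passing to a limit gives $0\in D^*F(\xb,\yb)(y^*)$ with $\norm{y^*}=1$, which contradicts \eqref{EqMoCrit}. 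Local closedness of $\gph F$ is needed here, so that Ekeland's principle applies on a closed piece of the graph. This sufficiency step is the main technical obstacle: the Ekeland construction has to keep the generated points in the neighborhood where the graph is closed.

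For part (ii), necessity is direct. Suppose a Lipschitz single-valued localization $h$ of $F^{-1}$ exists, and take $(u,0)\in T^P_{\gph F}(\xb,\yb)$, written as $u=\lim (x_k'-x_k)/t_k$ with $0=\lim (y_k'-y_k)/t_k$. Then $x_k=h(y_k)$ and $x_k'=h(y_k')$, so $\norm{x_k'-x_k}\le L\norm{y_k'-y_k}=o(t_k)$, and hence $u=0$.

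For sufficiency, first note that $D_*F(\xb,\yb)^{-1}(0)=\{0\}$ yields, by a compactness argument, a constant $L$ and a neighborhood $W$ of $(\xb,\yb)$ such that $\norm{x-x'}\le L\norm{y-y'}$ for all $(x,y),(x',y')\in\gph F\cap W$. Otherwise, normalizing pairs that violate this bound would produce a nonzero $(u,0)$ in the paratingent cone. The Lipschitz bound gives single-valuedness of $F^{-1}$ on $W$ and the Lipschitz estimate relative to its domain, which already yields the localization.

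A secondary subtlety in (ii) is the phrase ``relative to its domain''. Without a metric regularity assumption, the domain of the localization need not be a full neighborhood of $\yb$, which is exactly why that qualifier appears in the statement.
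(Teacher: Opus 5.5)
The paper gives no proof of Theorem~\ref{ThMordRoCrit}. Both parts are quoted from \cite[Theorem 3.3]{Mo18} and \cite[Proposition 3.1]{Ro25_prepr}, and your sketch reconstructs the standard arguments behind these citations.

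\textbf{Part (ii) and necessity in (i).} Part (ii) is complete as you wrote it. Normalizing a violating pair by $t_k:=\norm{x_k'-x_k}$ gives a unit $u$ with $(u,0)\in T^P_{\gph F}(\xb,\yb)$. Choosing $W=\X\times\Y$, the localization is $h(y)=$ the unique point of $F^{-1}(y)\cap\X$ on $\dom h=\{y\in\Y\mv F^{-1}(y)\cap\X\neq\emptyset\}$; local closedness is not even needed there. The necessity half of (i) is also fine, and you actually get $\norm{y_k^*}\le\kappa\norm{x_k^*}$ exactly for regular normals at nearby graph points.

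\textbf{The gap: the starting point of the Ekeland argument in (i).} Failure of \eqref{EqMetrReg} only gives $\dist{x_k,F^{-1}(y_k)}>k\,\dist{y_k,F(x_k)}$. Nothing forces $\eta_k:=\dist{y_k,F(x_k)}\to0$: all points of $F(x_k)$ may stay a fixed distance from $\yb$, which is compatible with the inequality when $F^{-1}(y_k)$ is far from $x_k$ or empty. In that case a starting point $(x_k,\tilde y_k)$ with $\tilde y_k\in F(x_k)$ is not close to $(\xb,\yb)$. The Ekeland point may then leave the region where $\gph F$ is closed, and the normals you produce need not accumulate at $(\xb,\yb)$. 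Flagging locality as ``the main obstacle'' does not resolve this; you need an extra reduction.

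\textbf{A repair.} The cleanest fix is to argue via the equivalent Aubin property of $F^{-1}$. Its failure yields $(x_k,v_k)\in\gph F$ with $(x_k,v_k)\to(\xb,\yb)$, $y_k\to\yb$, and $\dist{x_k,F^{-1}(y_k)}\ge k\,\varepsilon_k$, where $\varepsilon_k:=\norm{y_k-v_k}>0$. Apply Ekeland to $\norm{y-y_k}$ on $\gph F\cap\B((\xb,\yb),r)$, starting at $(x_k,v_k)$ with radius $\lambda_k:=\min\{k\varepsilon_k/2,\sqrt{\varepsilon_k}\}$. Then:
\begin{itemize}
\item the Ekeland points $(\hat x_k,\hat y_k)$ converge to $(\xb,\yb)$;
\item the penalty $\varepsilon_k/\lambda_k=\max\{2/k,\sqrt{\varepsilon_k}\}$ tends to $0$;
\item $\hat y_k\neq y_k$, because $\norm{\hat x_k-x_k}\le\lambda_k<\dist{x_k,F^{-1}(y_k)}$.
\end{itemize}
Alternatively, split cases. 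If $\eta_k\not\to0$, then $F^{-1}(y_k)$ misses a fixed neighborhood of $\xb$, and you can run Ekeland from $(\xb,\yb)$ with accuracy and radius $\norm{y_k-\yb}$ and $\sqrt{\norm{y_k-\yb}}$.

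With this adjustment the rest of your argument is the standard proof: smoothness of $\norm{\cdot-y_k}$ near $\hat y_k$, the fuzzy sum rule, renormalizing $y_k^*$ (which is only approximately a unit vector, not exactly), and passing to the limit.
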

Note that the strict derivative criterion \eqref{EqRoStrictDerivCrit} can be also equivalently written as
\[\ker D_*F(\xb,\yb)=\{0\}\quad\mbox{or}\quad \Big(0\in D_*F(\xb,\yb)(u)\ \Rightarrow\ u=0\Big).\]

We now turn our attention to another notion of differentiability. We call a set-valued mapping $F:\R^n\tto\R^m$ {\em strictly proto-differentiable} at $(\xb,\yb)\in\gph F$ if
\[\Lim_{\AT{(x,y)\longsetto{\gph F}(\xb,\yb)}{t\downarrow 0}}\frac{\gph F-(x,y)}t\]
exists. This definition goes back to the work of Poliquin and Rockafellar \cite{PolRo96b} and has its roots in the notion of {\em strictly smooth sets} introduced by Rockafellar \cite{Ro85}. One of the most important features of strict proto-differentiability is that it provides a linkage between the limiting coderivative and the strict graphical derivative: If $\gph F$ is locally closed at $(\xb,\yb)$ then  $F$ is strictly proto-differentiable at $\xb$ for $\yb$ if and only if both $\gph D_*F(\xb,\yb)$ and $\gph D^*F(\xb,\yb)$ are subspaces satisfying
  \begin{equation}\label{EqLemStrProto}\gph D^*F(\xb,\yb)=\{(y^*,x^*)\mv (x^*,-y^*)\in\gph D_*F(\xb,\yb)^\perp\},\end{equation}
  cf. \cite[Lemma 2.4]{Gfr25c}. This has also the consequence \cite[Theorem 3.6]{Gfr25c} that metric regularity of $F$ around $(\xb,\yb)$ implies strong metric regularity, provided $F$ is graphically Lipschitzian of dimension $m$ around $(\xb,\yb)$ according to the following definition.
  \begin{definition}[cf.{\cite[Definition 9.66]{RoWe98}}]\label{DefGraphLip}A mapping $F:\R^n\tto\R^m$ is {\em graphically Lipschitzian of dimension $d$} at $(\xb,\yb)\in\gph F$ if there is an open neighborhood $W$ of $(\xb,\yb)$ and a one-to-one mapping $\Phi$ from $W$ onto an open subset of $\R^{n+m}$ with $\Phi$ and $\Phi^{-1}$ continuously differentiable, such that $\Phi(\gph F\cap W)$  can be identified with the graph of a Lipschitz continuous mapping $f:U\to\R^{n+m-d}$, where $U$ is an open set in $\R^d$.
\end{definition}
A prominent example for a graphically Lipschitzian mapping is the subdifferential mapping of an \lsc convex function $g:\R^n\to\oR$. Consider the {\em proximal mapping} $\prox g$ of $g$ defined by
\[\prox g(z):=\argmin_x\{\frac 12\norm{x-z}^2+g(x)\}\]
Then it is well-known by Minty's Theorem, see, e.g., \cite[Theorem 12.15]{RoWe98}, that $\prox g$ is single-valued and Lipschitzian, in fact nonexpansive, on $\R^m$ and $\gph \prox g=\Phi(\gph \partial g)$ with $\Phi(x,x^*):=(x+x^*,x)$.

At the end of this subsection let us recall the notion of the {\em B-differential} (B-Jacobian) of a single-valued mapping $F:U\to\R^m$, where $U\subset\R^n$ is an  open set. The B-differential of $F$ at $x \in U$ is defined as
\[\onabla F(x):=\{A\in\R^{m\times n}\mv \exists x_k\to x: \mbox{ $F$ is Fr\'echet differentiable at $x_k$ and }A = \lim_{k\to\infty}\nabla F(x_k)\}.\]
Recall that the Clarke Generalized Jacobian is given by $\co\onabla F(x)$, i.e., the convex hull of the B-differential.

\subsection{\label{SubsecSCD_quad} On SC derivatives of the subdifferential mapping and quadratic bundles}
Let us briefly recall some basics about SC derivatives which are used in this paper.
Consider the metric space $\Z_n$ of all $n$-dimensional subspaces of $\R^n\times \R^n$ equipped with the metric
\[d_\Z(L_1,L_2)=\norm{P_{L_1}-P_{L_2}},\]
where $P_{L_i}$, $i=1,2$, denotes the orthogonal projection onto $L_i$. Given a subspace $L\in  \Z_n$, we denote by
\[L^*:=\{(y^*,x^*)\in\R^m\times\R^n\mv (x^*,-y^*)\in L^\perp\}.\]
its {\em adjoint }subspace. Then $(L^*)^*=L$ and $d_\Z(L_1,L_2)=d_\Z(L_1^*,L_2^*)$, cf. \cite{GfrOut22}. Since $\dim L^*=\dim L^\perp = n+n-\dim L=n$, we have $L^*\in\Z_{n}$ whenever $L\in\Z_{n}$.

\begin{definition}\label{DefSCD}Let $F:\R^n\tto\R^n$ be a mapping.
  \begin{enumerate}
    \item $F$ is called {\em graphically smooth} at $(x,y)\in\gph F$ and of dimension $d$ in this respect, if $T_{\gph F}(x,y)$ is a $d$-dimensional subspace of $\R^n\times\R^n$. We denote by $\OO_F$ the set of all points from the graph of $F$, where $F$ is graphically smooth of dimension $n$.
    \item The  {\em subspace containing derivative} (SC derivative) $\Sp F:\gph F\tto  \Z_n$ is defined by
    \begin{gather*}
      \Sp F(x,y):=\{L\in \Z_{n}\mv \exists (x_k,y_k)\longsetto {\OO_F}(x,y): d_\Z(T_{\gph F}(x_k,y_k),L)=0\},\\
    \end{gather*}
    whereas the {\em adjoint} SC derivative   $\Sp^*F:\gph F\tto \Z_n$ is given by
    \[\Sp^*F(x,y):=\{L^*\mv L\in\Sp F(x,y)\}.\]
    \item We say that $F$ has the {\em  SCD (subspace containing derivative) property} at $(\xb,\yb)\in\gph F$  if $\Sp F(\xb,\yb)\not=\emptyset$.
        Further we say that $F$ is an  SCD mapping if it has the SCD property at every $(x,y)\in\gph F$.
  \end{enumerate}
\end{definition}

\begin{remark}
  The SC derivatives $\Sp F$ and $\Sp^*F$ were introduced in \cite{GfrOut22} in a slightly different but equivalent way. We refer to \cite{GfrOut23} for SC derivatives of mappings $F:\R^n\tto\R^m$.
\end{remark}

By combining Lemmas 3.7, 3.10 and Remark 3.9 from \cite{GfrOut22} we obtain
\begin{equation}\label{EqIncl1}
  L\subset \gph D_*F(x,y),\ L^*\subset \gph D^*F(x,y)\quad \mbox{for all }L\in\Sp F(x,y).
\end{equation}
Hence, the SC derivatives can be interpreted as a kind of skeleton for the strict graphical derivative and the limiting coderivative, respectively.

We now turn our attention to the SC derivative of the subgradient mapping of a prox-regular and subdifferentially continuous function $\psi:\R^n\to\oR$.
\begin{proposition}[{\cite[Proposition 3.26]{GfrOut22}}]\label{PropSCDproxreg}
    Suppose that $\psi:\R^n\to\oR$ is prox-regular and subdifferentially continuous at $\xb$ for $\xba\in\partial\psi(\xb)$. Then for all $(x,x^*)\in\gph\partial \psi$ sufficiently close to $(\xb,\xba)$ one has $\Sp(\partial\psi)(x,x^*)=\Sp^*(\partial\psi)(x,x^*)\not=\emptyset$ and
    $L=L^*$ $\forall L\in \Sp(\partial\psi)(x,x^*)$, i.e., $L$ is self-adjoint.
\end{proposition}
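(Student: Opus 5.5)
The plan is to transfer the problem to the proximal mapping via a Minty-type parametrization. After a tilt, $\xba=0$. Prox-regularity and subdifferential continuity of $\psi$ at $\xb$ for $\xba$ give, by the standard localization argument (cf. \cite[Proposition 13.37]{RoWe98}), an $\epsilon$-localization $T$ of $\partial\psi$ around $(\xb,\xba)$ on which the following holds. For $\lambda>0$ small, $T+rI$ is the localization of a monotone, in fact maximal monotone after localization, mapping. Consequently the mapping $z\mapsto P(z):=(I+\lambda T)^{-1}(z)$ is single-valued and Lipschitz near $\xb+\lambda\xba$. Moreover, $P$ is the gradient of a $C^{1,1}$ function, namely the Moreau envelope $e_\lambda\psi$ via $\nabla e_\lambda\psi=\lambda^{-1}(I-P)$. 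Hence $\gph\partial\psi$ near $(\xb,\xba)$ is the image of $\gph P$ under the linear isomorphism $\Phi(z,x)=(x,\lambda^{-1}(z-x))$.

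Step 1: pass to the prox side. For $(x,x^*)$ in the graph near the reference point, the tangent cones correspond under the linear map $\Phi$. So $(x,x^*)\in\OO_{\partial\psi}$ iff $T_{\gph P}(z,x)$ is an $n$-dimensional subspace, with $z=x+\lambda x^*$. At points where $P$ is differentiable, the tangent cone is $\rge(I,\nabla P(z))$, which is $n$-dimensional. The Rademacher theorem gives a dense set of such $z$; since $\gph P$ and $\gph\partial\psi$ correspond homeomorphically, nearby points of $\OO$ exist around every graph point. Taking limits in the compact space $\Z_n$ then shows $\Sp(\partial\psi)(x,x^*)\neq\emptyset$.

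Step 2: prove self-adjointness. It suffices to show $L=L^*$ for every $L$ of the form $T_{\gph\partial\psi}(x_k,x_k^*)$ with $(x_k,x_k^*)\in\OO$. This suffices because $L\mapsto L^*$ is an isometry in $d_\Z$, so the set $\{L\mv L=L^*\}$ is closed. At an $\OO$-point, the tangent space of $\gph P$ is an $n$-dimensional subspace. Since $P=\nabla(\frac12\norm{\cdot}^2-\lambda e_\lambda\psi)$ with a $C^{1,1}$ potential, I claim this subspace has the form $\rge(A,B)$ with $A^TB$ symmetric. At points of differentiability it equals $\rge(I,\nabla P)$, and $\nabla P$ is symmetric as a Hessian of a $C^{1,1}$ function wherever it exists. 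At general $\OO$-points, I would show that $T_{\gph P}$ is a Lagrangian subspace. It is a subspace contained in the paratingent cone, and it is isotropic for the symplectic form $\omega((z_1,x_1),(z_2,x_2))=\skalp{z_1,x_2}-\skalp{x_1,z_2}$. Isotropy holds because difference quotients of gradients of a $C^{1,1}$ function are limits of symmetric generalized Hessian actions, i.e. elements of the Clarke Jacobian, which consists of symmetric matrices.

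Then transport through $\Phi$. The map $\Phi$ preserves the Lagrangian property up to the factor $\lambda^{-1}$, because $\skalp{x_1,\lambda^{-1}(z_2-x_2)}-\skalp{x_2,\lambda^{-1}(z_1-x_1)}=\lambda^{-1}\omega$. For an $n$-dimensional subspace $L\subset\R^n\times\R^n$, being Lagrangian with respect to $\skalp{u_1,v_2}-\skalp{u_2,v_1}$ is exactly $L^\perp=\{(-v,u)\mv(u,v)\in L\}$, which is precisely $L=L^*$. This gives $\Sp(\partial\psi)=\Sp^*(\partial\psi)$.

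The main obstacle is the isotropy at $\OO$-points where $P$ is not differentiable. I would handle it with the mean value inclusion for Clarke Jacobians. For $w,w'$ near $z$, we have $P(w)-P(w')\in\co\onabla P(\,[w,w']\,)(w-w')$, with symmetric positive semidefinite matrices. A tangent vector pair then arises as a limit of $(d,Md)$ with $M$ symmetric. However, isotropy requires two tangent vectors simultaneously, i.e. $\skalp{d_1,M d_2}=\skalp{d_2,Md_1}$ with the same $M$. I would instead exploit that at an $\OO$-point the tangent cone is a subspace, and apply the symmetric-Jacobian argument to $d_1+d_2$ against $d_1,d_2$. Failing that, I would fall back on citing \cite[Proposition 3.26]{GfrOut22}.
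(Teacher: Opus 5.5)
Your architecture is sound. It consists of the local Minty/Moreau-envelope parametrization of $\gph\partial\psi$ by $\gph P$ (adding the indicator of a small closed ball around $\xb$ secures the prox-boundedness needed for \cite[Proposition 13.37]{RoWe98}), nonemptiness via Rademacher and compactness of $\Z_n$, closedness of $\{L\mv L=L^*\}$ because $L\mapsto L^*$ is a $d_\Z$-isometry, and the identification of $L=L^*$ with $L$ being Lagrangian. Your transport formula has a harmless sign slip: in fact $\omega\circ\Phi=-\lambda^{-1}\omega$. The paper gives no proof of this proposition and only quotes \cite[Proposition 3.26]{GfrOut22}.

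The gap is the case you flag yourself, namely $\OO$-points at which $P$ is not differentiable, and your repair does not work as sketched. The mean value inclusion only gives $w_i=M_id_i$ for tangent pairs $(d_i,w_i)$, with symmetric $M_i\in\co\onabla P(z)$ depending on the direction. For $d_1$, $d_2$ and $d_1+d_2$ you get three possibly different matrices, and nothing forces $\skalp{d_1,M_2d_2}=\skalp{d_2,M_1d_1}$. Information of the form $\skalp{d,w}=\skalp{d,Md}$ only controls the symmetric part of the pairing $((d_1,w_1),(d_2,w_2))\mapsto\skalp{d_1,w_2}$ on $L$, whereas isotropy concerns its skew part. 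Falling back on \cite[Proposition 3.26]{GfrOut22} is circular, since that is the statement to be proved.

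The missing idea is that this case is vacuous. Since $P$ is Lipschitz with some constant $\ell$, the cone $T_{\gph P}(z,P(z))$ projects onto all of $\R^n$ in the first component. It also contains no $(0,w)$ with $w\not=0$, because $P(z+t_kd_k)-P(z)=t_kw_k$ gives $\norm{w_k}\leq\ell\norm{d_k}$. Hence, if this cone is an $n$-dimensional subspace, it equals $\rge(I,A)$ for some $n\times n$ matrix $A$.

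Then $P$ is Fr\'echet differentiable at $z$ with $\nabla P(z)=A$. Otherwise there are $h_k\to0$ with $\norm{P(z+h_k)-P(z)-Ah_k}\geq\epsilon\norm{h_k}$. Put $t_k:=\norm{h_k}$ and $d_k:=h_k/t_k\to d$ along a subsequence. By Lipschitz continuity, $(P(z+t_kd_k)-P(z))/t_k$ has a limit point $w$ with $(d,w)\in T_{\gph P}(z,P(z))$, so $w=Ad$, which contradicts the $\epsilon$-bound. Consequently, $\OO$-points of $\partial\psi$ near $(\xb,\xba)$ correspond exactly to differentiability points of $P$, and $\Sp(\partial\psi)(x,x^*)=\{\rge(A,\lambda^{-1}(I-A))\mv A\in\onabla P(x+\lambda x^*)\}$. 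This is the same mechanism that underlies \eqref{EqSCDconvex}.

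Symmetry of $A=\nabla P(z)$ also needs a word, since $h:=\frac12\norm{\cdot}^2-\lambda e_\lambda\psi$ is only $C^{1,1}$ and $\nabla h=P$ is differentiable merely at the single point $z$. The second difference $h(z+te_i+te_j)-h(z+te_i)-h(z+te_j)+h(z)$ equals $t^2A_{ij}+o(t^2)$, by the one-variable mean value theorem and differentiability of $\nabla h$ at $z$. By symmetry in $i,j$ it also equals $t^2A_{ji}+o(t^2)$, so $A_{ij}=A_{ji}$. Since $A$ commutes with $I-A$, the subspace $\rge(A,\lambda^{-1}(I-A))$ is self-adjoint, and your closedness argument completes the proof without any Clarke-Jacobian isotropy argument.
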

For the practical computation of SC derivatives one needs a basis representation of the subspaces contained in it and
our practical experience tells us that the use of the following bases is advantageous. Let $\Z_n^{P,W}$ denote the collection of all subspaces $L\in\Z_n$ such that there are symmetric
$n\times n$ matrices $P$ and $W$ fulfilling $L = \rge(P,W)$ and
\begin{align}\label{EqPW}
 P^2 = P,\ W(I-P)=I-P^.
\end{align}
This implies that $P$ is the orthogonal projection onto some subspace of $\R^n$ and
\begin{equation}
  \label{EqPW1} W=PWP+(I-P),
\end{equation}
cf. \cite[Equation (3.11)]{Gfr25a}.
For any $L\in\Z_n^{P,W}$ the corresponding matrices $P,W$ are unique and, in the setting of Proposition \ref{PropSCDproxreg},  there holds $\Sp(\partial \psi)(x,x^*)\subset \Z_n^{P,W}$ for all $(x,x^*)\in \gph\partial \psi$ sufficiently close to $(\xb,\xba)$, cf. \cite{Gfr25a}. In what follows we denote by $\M_{P,W}\partial \psi(x,x^*)$ the collection of all symmetric $n\times n$ matrices $P,W$ fulfilling \eqref{EqPW} and
\begin{equation}\label{EqM_PW}
  \rge(P,W)\in\Sp(\partial \psi)(x,x^*).
\end{equation}
We now turn our attention to quadratic bundles introduced by Rockafellar \cite{Ro23a}. We restrict here the  definition to prox-regular and subdifferentially continuous functions, because for more general functions the definition has to be slightly changed in order to be useful,
 cf. \cite[Definition 4.4]{Ro25_prepr}.
 \begin{definition}\label{DefQuadrBun}
   \begin{enumerate}
     \item A function $q:\R^n\to\oR$ is called a {\em generalized quadratic form}, if $q(0)=0$ and the subgradient mapping $\partial g $ is {\em generalized linear}, i.e., $\gph \partial q$ is a subspace of $\R^n\times\R^n$.
     \item A function $\psi:\R^n\to\oR$ will be called {\em generalized twice differentiable} at $x$ for a subgradient $x^*\in\partial \psi(x)$, if it is twice epi-differentiable at $x$ for $x^*$ with the second subderivative ${\rm d}^2\psi(x, x^*)$ being a generalized quadratic form $q$.
     \item Given an \lsc function $\psi:\R^n\to\oR$ which is prox-regular and subdifferentially continuous at $x$ for $x^*\in\partial\psi(x)$, the {\em quadratic bundle} of $\psi$ at $x$ for $x^*$ is defined by
     \[{\rm quad\,}\psi(x,x^*):=\left[\ \begin{minipage}{11cm}the collection of generalized quadratic forms $q$ for which $\exists (x_k,x_k^*)\to(x,x^*)$ with $\psi$
             generalized twice differentiable at $x_k$ for $x_k^*$ and such that the generalized quadratic forms $q_k=\frac 12{\rm d}^2\psi(x_k, x_k^*)$ converge epigraphically to $q$.
     \end{minipage}\right.\]
   \end{enumerate}
 \end{definition}
 By taking into account that the generalized quadratic forms $q\in {\rm quad\,}g(x,x^*)$ in Definition \ref{DefQuadrBun} differ from the ones in \cite[Definition 3.30]{GfrOut22} by a factor $\frac 12$, the next statement follows from \cite[Proposition 3.33]{GfrOut22}.
 \begin{proposition}\label{PropSCD_quad}
 In the setting of Proposition \ref{PropSCDproxreg}, for every pair $(x,x^*)\in\gph\partial g$ sufficiently close to $(\xb,\xba)$ there holds
 \[\Sp(\partial \psi)(x,x^*)= \Sp^*(\partial \psi)(x,x^*) =\{\gph \partial q\mv q\in{\rm quad\,}\psi(x,x^*)\}.\]
 \end{proposition}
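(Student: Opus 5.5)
The statement is a rescaled form of \cite[Proposition 3.33]{GfrOut22}, so the plan is to reduce it to that result and to Proposition \ref{PropSCDproxreg} rather than argue from scratch. First, by Proposition \ref{PropSCDproxreg}, for all $(x,x^*)\in\gph\partial\psi$ sufficiently close to $(\xb,\xba)$ we already have $\Sp(\partial\psi)(x,x^*)=\Sp^*(\partial\psi)(x,x^*)$. It therefore remains to identify $\Sp(\partial\psi)(x,x^*)$ with $\{\gph\partial q\mv q\in{\rm quad\,}\psi(x,x^*)\}$. We shrink the neighborhood so that $\psi$ stays prox-regular and subdifferentially continuous at every such $(x,x^*)$; these properties are stable under small perturbations of the reference pair. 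Then the quadratic bundle ${\rm quad\,}\psi(x,x^*)$ in Definition \ref{DefQuadrBun} is well defined at each of these points.

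Second, we compare the two notions of quadratic bundle. \cite[Definition 3.30]{GfrOut22} works with the forms ${\rm d}^2\psi(x_k,x_k^*)$ and their epi-limits, while Definition \ref{DefQuadrBun} uses $q_k=\frac12{\rm d}^2\psi(x_k,x_k^*)$. Multiplication by the positive constant $\frac12$ commutes with epi-convergence. It also preserves the property of being a generalized quadratic form, because $\gph\partial(\frac12 q)=\{(u,\frac12u^*)\mv (u,u^*)\in\gph\partial q\}$ is again a subspace. Hence the two bundles correspond bijectively via $q\leftrightarrow 2q$.

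Third, we account for the change of graph under this scaling. In \cite[Proposition 3.33]{GfrOut22} the subspaces in $\Sp(\partial\psi)$ are written as $\gph\partial(\frac12 \tilde q)$ with $\tilde q$ from the old bundle, or in an equivalent form. Substituting $\tilde q=2q$ gives exactly $\gph\partial q$. This matches the quadratic-form convention $q(u)=\frac12\skalp{u,Au}$, for which $\gph\partial q=\rge(I,A)$, the tangent space of $\gph\partial\psi$ at points of twice differentiability.

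The only real obstacle is bookkeeping: we must make sure which normalization \cite{GfrOut22} uses, so that the factor $\frac12$ lands correctly. We will check this on the smooth case $\psi(u)=\frac12\skalp{u,Au}$. There ${\rm d}^2\psi(x,x^*)(u)=\skalp{u,Au}$, so $q=\frac12{\rm d}^2\psi$ gives $\gph\partial q=\rge(I,A)=T_{\gph\partial\psi}$, which confirms the claimed formula.
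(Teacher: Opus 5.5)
Your proposal is correct and takes essentially the paper's route: the paper simply invokes \cite[Proposition 3.33]{GfrOut22} and notes that the forms in Definition~\ref{DefQuadrBun} differ from those of \cite[Definition 3.30]{GfrOut22} by the factor $\frac12$, which is the same bookkeeping you carry out. Your additional remarks are sound supplementary details: prox-regularity and subdifferential continuity persist at pairs near $(\xb,\xba)$, so the bundle is well defined there, and the smooth-case check $\gph\partial q=\rge(I,A)$ pins down the normalization.
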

 For any pair $(P,W)$ of symmetric $n\times n$ matrices such that $P^2=P$ we can define a generalized quadratic form $q_{P,W}:\R^n\to\oR$ by
 \begin{equation}\label{EqQuad_PW}
   q_{P,W}(u):=\begin{cases}\frac 12\skalp{u,Wu}&\mbox{if $u\in\rge P$,}\\
   \infty&\mbox{otherwise.}
   \end{cases}
   \end{equation}
   Then we have $\gph \partial q_{P,W}= \rge(P, PWP+(I-P))$ and, by taking into account \eqref{EqPW1} and using the fact that a generalized quadratic form is uniquely given by its subdifferential, we conclude that
   \begin{equation}\label{EqQuad_PW_Equiv}
   {\rm quad\,}\psi(x,x^*) =\{q_{P,W}\mv (P,W)\in\M_{P,W}\partial \psi(x,x^*)\}.
 \end{equation}
At the end of this subsection, we recall some properties of the SC derivative of an \lsc convex function $g:\R^n\to\oR$.
Since $\gph \prox g= \Phi(\gph \partial g)$ with $\Phi(x,x^*)=(x+x^*,x)$, we obtain from \cite[Proposition 3.17]{GfrOut22} the formula
\begin{equation}\label{EqSCDconvex}\Sp \partial g(x,x^*)=\{\rge(B,I-B)\mv B\in \onabla \prox g(x+x^*)\}.
\end{equation}
Together with \cite[Corollary 3.28]{GfrOut22} we conclude that every matrix  $B\in \onabla \prox g(x+x^*)$ is symmetric positive semidefinte and satisfies $\norm{B}\leq 1$.
Further, by \cite[Proposition 7.1]{GfrOut22} together with \eqref{EqIncl1} we may formulate the following theorem, which will play an important role in the subsequent analysis.
\begin{theorem}
Given an \lsc convex function $g:\R^n\to\oR$, for every pair $(x,x^*)\in\gph \partial g$ there hold the inclusions
\begin{align}
\label{EqInclCoderiv}  \bigcup_{B\in \onabla \prox g(x+x^*)}\rge(B,I-B)\subset \gph D^*(\partial g)(x,x^*)\subset \bigcup_{B\in \co(\onabla \prox g(x+x^*))}\rge(B,I-B),\\
\label{EqInclStrictDeriv}  \bigcup_{B\in \onabla \prox g(x+x^*)}\rge(B,I-B)\subset \gph D_*(\partial g)(x,x^*)\subset \bigcup_{B\in \co(\onabla \prox g(x+x^*))}\rge(B,I-B),
\end{align}
\end{theorem}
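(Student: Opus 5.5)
The plan is to move everything to the graph of $\prox g$ and then compare with the Clarke generalized Jacobian of that Lipschitz map. The linear change of coordinates $\Phi(x,x^*)=(x+x^*,x)$ is a linear isomorphism of $\R^n\times\R^n$ and satisfies $\gph\prox g=\Phi(\gph\partial g)$. Being linear, $\Phi$ commutes with tangent cones, paratingent cones and normal cones in the obvious way:
\[T_{\gph\prox g}(\Phi(z))=\Phi(T_{\gph\partial g}(z)),\qquad T^P_{\gph\prox g}(\Phi(z))=\Phi(T^P_{\gph\partial g}(z)),\]
\[N_{\gph\prox g}(\Phi(z))=\Phi^{-T}(N_{\gph\partial g}(z)).\]
It therefore suffices to establish the analogous inclusions for the single-valued Lipschitz mapping $h:=\prox g$ at $\bar z:=x+x^*$ and then to transport back. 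Under $\Phi^{-1}$, a subspace $\{(v,Bv)\mv v\in\R^n\}$ is mapped to $\{(Bv,(I-B)v)\}=\rge(B,I-B)$. One checks with $B$ symmetric that the coderivative graph transforms to the same set, $\rge(B,I-B)$, because $\rge(B,I-B)$ is self-adjoint when $B=B^T$.

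The left inclusions are already contained in \eqref{EqIncl1} combined with \eqref{EqSCDconvex} and the self-adjointness from Proposition \ref{PropSCDproxreg}. Each $L=\rge(B,I-B)\in\Sp\partial g(x,x^*)$ satisfies $L\subset\gph D_*(\partial g)(x,x^*)$ and $L^*=L\subset\gph D^*(\partial g)(x,x^*)$. Directly, this amounts to the following: if $h$ is differentiable at $z_k\to\bar z$ with $\nabla h(z_k)\to B$, then $\gph\nabla h(z_k)$ is the tangent space of $\gph h$ at $(z_k,h(z_k))$, and passing to the limit places $\gph B$ in the paratingent cone. The regular normal cone at $(z_k,h(z_k))$ contains $\{(\nabla h(z_k)^Tw,-w)\}$, and its limit gives the coderivative inclusion.

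For the right inclusions I would use Clarke's classical results for Lipschitz maps. For the strict derivative: given $(v,w)$ with $w=\lim (h(z_k+t_kv_k)-h(z_k))/t_k$, Clarke's mean value theorem gives $h(z_k+t_kv_k)-h(z_k)\in\co\{\nabla h\text{ on the segment}\}t_kv_k$. This is approximately $\co\onabla h(\bar z)$ applied to $v_k$ up to $o(1)$, by upper semicontinuity of the generalized Jacobian. Closedness then yields $w=Cv$ with $C\in\co\onabla h(\bar z)$. For the coderivative I would invoke the known estimate $D^*h(\bar z)(w)\subset\{C^Tw\mv C\in\co\onabla h(\bar z)\}$, i.e.\ the fact that the limiting coderivative of a Lipschitz map is contained in the transposed Clarke Jacobian. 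This follows from $\partial\skalp{w,h}(\bar z)\subset\partial_C\skalp{w,h}(\bar z)=\{C^Tw\}$. Since every $C\in\co\onabla h(\bar z)$ is symmetric (it is a convex combination of symmetric matrices), pulling back through $\Phi$ gives exactly $\rge(C,I-C)$.

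The main obstacle is the bookkeeping of the coderivative transformation under $\Phi$. One must verify that $(v^*,-w)\in N_{\gph h}$ corresponds to $(y^*,x^*)$ with $(x^*,-y^*)\in N_{\gph\partial g}(x,x^*)$ and lands precisely in $\rge(C,I-C)$; symmetry of $C$ is what is used there. The remaining steps are standard.
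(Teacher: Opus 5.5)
Your proposal is correct and follows the paper's route. The paper obtains the left inclusions from \eqref{EqIncl1} together with \eqref{EqSCDconvex} and self-adjointness, exactly as you do. For the right inclusions the paper just cites \cite[Proposition 7.1]{GfrOut22}, and your argument is a valid self-contained substitute for that citation: you transport to $\gph\prox g$, use the mean value theorem for $D_*$, use the scalarization $D^*h(\bar z)(w)=\partial\skalp{w,h}(\bar z)\subset\{C^Tw\mv C\in\co\onabla h(\bar z)\}$ for $D^*$, and then use the symmetry of $C$. In the mean value step, state it with the Clarke Jacobians $\partial_C h$ along the segment rather than gradients, since $h$ need not be differentiable at any point of that segment.
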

From \eqref{EqSCDconvex} we may conclude  that computing $\Sp(\partial g)$ for an \lsc convex function $g$ is not more difficult than computing the B-differential of its proximal mapping. This is still a difficult task but it seems to be more tractable than computing limiting coderivatives or strict graphical derivatives. Note that the computation of the $(P,W)$-basis representation of the SC derivative is sometimes even simpler and is also applicable when the proximal mapping is not at hand but the subdifferential $\partial g$ is available. Determining the set $\OO_{\partial g}$ is usually not so difficult. In many cases it is  related with the set of pairs $(x,x^*)\in \gph\partial g$ fulfilling $x^*\in\ri\partial g(x)$. Also the calculation of tangent spaces at those points is  often not very complicated and only computing the possible limits is sometimes involved.
Further, consider, e.g., the special case when $g$ is twice continuously differentiable. Then $\M_{P,W}(\partial g)(x,\nabla g(x))=\{(I,\nabla^2 g(x))\}$, whereas $\onabla \prox g(x+\nabla g(x))=\{(I+\nabla^2 g(x))^{-1}\}$. This trivial example indicates that the computation of the $(P,W)$-basis representations of the subspaces $L$ contained in the SC derivative of a subdifferential mapping is more related to the concept of twice differentiability than the computation of the B-differential of the proximal mapping.

 \section{On the second-order qualification condition}

 Throughout this section we will suppose the following assumption.

\begin{assumption}\label{AssA1}
 The mapping $F:\R^n\to\R^m$ is continuously differentiable around the reference point $\xb\in\R^n$ and $g:\R^n\to\oR$ is an \lsc convex function such that  $\yb:=F(\xb)\in\dom g$. Finally, we are given a subgradient $\yba\in\partial g(\yb)$ and set $\xba:=\nabla F(\xb)^T\yba$.
 \end{assumption}

 The following theorem is fundamental for our analysis.

 \begin{theorem}\label{ThSOQC}
   The following conditions are equivalent:
   \begin{enumerate}
     \item[(i)]
     \begin{equation}
   \label{EqSOQC_SCD}\Big(\nabla F(\xb)^Tv^*=0,\ (0,v^*)\in L\ \Rightarrow\ v^*=0\Big)\quad\mbox{for all }L\in\Sp(\partial g)(\yb,\yba).
 \end{equation}
     \item[(ii)]
     \begin{equation}
   \label{EqSOQC_quad}\Big(\nabla F(\xb)^Tv^*=0,\ v^*\in \partial q(0)\ \Rightarrow\ v^*=0\Big)\quad\mbox{for all }q\in{\rm quad\,}g(\yb,\yba).
 \end{equation}
 \item[(iii)]
 \begin{equation}
 \label{EqSOQC_Coder} \nabla F(\xb)^Tv^*=0,\ v^*\in D^*(\partial g)(\yb,\yba)(0)\ \Rightarrow\ v^*=0.
 \end{equation}
 \item[(iv)]
 \begin{equation}
 \label{EqSOQC_StrictDer} \nabla F(\xb)^Tv^*=0,\ v^*\in D_*(\partial g)(\yb,\yba)(0)\ \Rightarrow\ v^*=0.
 \end{equation}
 \item[(v)]The mapping $\Psi:\R^n\times \R^m\tto\R^m$ given by
 \begin{equation}
   \Psi(x,y^*)=(\partial g)^{-1}(y^*)-F(x)
 \end{equation}
 is metrically regular around $\big((\xb,\yba),\bb)$.
 \item[(vi)]The multiplier mapping $M_{\xb}:\R^n\times\R^m\tto\R^m$ given by
 \[M_{\xb}(x^*,b)=\{y^*\in\partial g(\yb+b)\mv \nabla F(\xb)^Ty^*=x^*\}\]
 has a graphical localization around $\big((\xba,\bb),\yba\big)$ which is single-valued and Lipschitzian relative to its domain.
   \end{enumerate}
 \end{theorem}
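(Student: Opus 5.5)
The plan is to split the six conditions into the cluster (i)--(iv), which only involves generalized derivatives of $\partial g$, and the two mapping properties (v) and (vi). The latter will be linked to (iii) and (iv) by Theorem \ref{ThMordRoCrit}.

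\emph{(i)$\Leftrightarrow$(ii).} This is immediate from Proposition \ref{PropSCD_quad}. Every $L\in\Sp(\partial g)(\yb,\yba)$ has the form $L=\gph\partial q$ with $q\in{\rm quad\,}g(\yb,\yba)$, and conversely. Moreover, $(0,v^*)\in L$ holds exactly when $v^*\in\partial q(0)$.

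\emph{(iii)$\Rightarrow$(i) and (iv)$\Rightarrow$(i).} These follow from \eqref{EqIncl1}. For $L\in\Sp(\partial g)(\yb,\yba)$ we have $L=L^*$ by Proposition \ref{PropSCDproxreg}. Hence $(0,v^*)\in L$ gives both $(0,v^*)\in\gph D^*(\partial g)(\yb,\yba)$ and $(0,v^*)\in\gph D_*(\partial g)(\yb,\yba)$.

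\emph{(i)$\Rightarrow$(iii),(iv).} This direction is the main point. I would use the right-hand inclusions in \eqref{EqInclCoderiv} and \eqref{EqInclStrictDeriv}. Suppose $\nabla F(\xb)^Tv^*=0$ and $(0,v^*)$ lies in either graph. Then there are $p$ and $B=\sum_i\lambda_iB_i$, a convex combination of $B_i\in\onabla\prox g(\yb+\yba)$, with $Bp=0$ and $(I-B)p=v^*$, so $v^*=p$. Each $B_i$ is symmetric positive semidefinite, so
\[
0=\skalp{p,Bp}=\sum_i\lambda_i\skalp{p,B_ip}
\]
forces $\skalp{p,B_ip}=0$, and therefore $B_ip=0$ for all $i$ with $\lambda_i>0$. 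Fix such an $i$. Then $(0,p)=(B_ip,(I-B_i)p)\in\rge(B_i,I-B_i)$, and this subspace belongs to $\Sp(\partial g)(\yb,\yba)$ by \eqref{EqSCDconvex}. Since $\nabla F(\xb)^Tp=0$, condition (i) yields $p=v^*=0$. The key observation is that positive semidefiniteness lets us pass from the convex hull back to a single element of the B-differential. This is the step I expect to carry the proof, and it closes the cluster (i)--(iv).

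\emph{(iii)$\Leftrightarrow$(v).} I would apply the Mordukhovich criterion, Theorem \ref{ThMordRoCrit}(i), to $\Psi$. Its graph is $\{(x,y^*,b)\mv (F(x)+b,y^*)\in\gph\partial g\}$, which is closed near the reference point. It is the preimage of $\gph\partial g$ under the map $(x,y^*,b)\mapsto(F(x)+b,y^*)$, whose Jacobian is surjective because of the $b$-component. Alternatively one can use the $C^1$ diffeomorphism $(x,y^*,b)\mapsto(F(x)+b,y^*,x)$. Either way the normal cone transforms exactly:
\[
N_{\gph\Psi}\big((\xb,\yba),\bb\big)=\{(\nabla F(\xb)^Tw,z,w)\mv (w,z)\in N_{\gph\partial g}(\yb,\yba)\}.
\]
Now $0\in D^*\Psi(v)$ means that $(0,0,-v)$ is such a normal. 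This holds iff $w=-v$, $z=0$, $\nabla F(\xb)^Tw=0$ and $(w,0)\in N_{\gph\partial g}(\yb,\yba)$, that is, iff $w\in D^*(\partial g)(\yb,\yba)(0)$ with $\nabla F(\xb)^Tw=0$. So \eqref{EqMoCrit} for $\Psi$ is precisely (iii).

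\emph{(iv)$\Leftrightarrow$(vi).} I would apply Theorem \ref{ThMordRoCrit}(ii) to $M_{\xb}^{-1}$. Its graph is
\[
\gph M_{\xb}=\{(\nabla F(\xb)^Ty^*,\,y-\yb,\,y^*)\mv (y,y^*)\in\gph\partial g\},
\]
the image of the closed set $\gph\partial g$ under an injective linear map. Paratingent cones commute with injective linear maps, so
\[
\gph D_*M_{\xb}=\{(\nabla F(\xb)^Tv^*,u,v^*)\mv (u,v^*)\in\gph D_*(\partial g)(\yb,\yba)\}.
\]
Thus $v^*\in D_*(M_{\xb}^{-1})(\cdot)^{-1}(0)$ iff $\nabla F(\xb)^Tv^*=0$ and $(0,v^*)\in\gph D_*(\partial g)(\yb,\yba)$. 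Hence \eqref{EqRoStrictDerivCrit} coincides with (iv).

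In the last two equivalences the only care needed is bookkeeping of signs and component order in the definitions of $D^*$ and $D_*$. No sign issue arises because the conditions involve $v^*$ only through the linear equation $\nabla F(\xb)^Tv^*=0$ and membership in cones.
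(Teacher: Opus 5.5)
Your proof is correct and follows the paper's argument step by step, including the key use of positive semidefiniteness and symmetry of the $B_i$ to pass from $\co\onabla\prox g(\yb+\yba)$ back to a single subspace $\rge(B_i,I-B_i)\in\Sp(\partial g)(\yb,\yba)$. The only difference is cosmetic: you compute $N_{\gph\Psi}$ and $T^P_{\gph M_{\xb}}$ directly, via the submersion $(x,y^*,b)\mapsto(F(x)+b,y^*)$ and the injective linear parametrization of $\gph M_{\xb}$, whereas the paper obtains the same formulas from the sum rule in Rockafellar--Wets (Exercise 10.43), applied to $\Psi$ and to $\Gamma=M_{\xb}^{-1}$.
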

 \begin{proof}The equivalence (i)$\Leftrightarrow$(ii) is an easy consequence of Proposition \ref{PropSCD_quad}. The implication (iii)$\Rightarrow$(i) follows from the second inclusion in \eqref{EqIncl1} together with the fact that $\Sp(\partial g)(\yb,\yba)=\Sp^*(\partial g)(\yb,\yba)$. We prove the implication (i)$\Rightarrow$(iii) by contraposition. Assume on the contrary to (iii) that there is some nonzero $v^*$ satisfying $\nabla F(\xb)^Tv^*=0$ and $(0,v^*)\in\gph D^*(\partial g)(\yb,\yb^*)$. By the second inclusion in \eqref{EqInclCoderiv} we can find some matrix $B\in\co\onabla\prox g(\yb+\yba)$ and some $p\in \R^m$ with $Bp=0$ and $v^*=(I-B)p=p$. Since the B-differential $\onabla\prox g(\yb+\yba)$ is compact, there exists a natural number $N$, positive reals $\alpha_i$ and matrices $B_i\in\onabla\prox g(\yb,\yba)$, $i=1,\ldots,N$ such that $\sum_{i=1}^N\alpha_i=1$ and $\sum_{i=1}^N\alpha_iB_i=B$. From $Bp=0$ we readily infer that
 \[\skalp{p,Bp}=\sum_{i=1}^N\alpha_i\skalp{p,B_ip}=0,\]
 implying $\skalp{p, B_ip}=0$, $i=1,\ldots,N$ by the positive semidefiniteness of the matrices $B_i$ and, together with their symmetry, $B_ip=0$, $i=1,\ldots,N$ follows. By \eqref{EqSCDconvex} we obtain that $(0,v^*)=(B_ip,(I-B_i)p)\in\rge(B_i,I-B_i)\in \Sp(\partial g)(\yb,\yba)$, $i=1,\ldots,N$,  a contradiction to \eqref{EqSOQC_SCD}. Hence the implication (i)$\Rightarrow$(iii) holds true and the equivalence (i)$\Leftrightarrow$(iii) is established.\\
 The equivalence between (i) and (iv) can be shown in the same way by using the first inclusion in \eqref{EqIncl1} and \eqref{EqInclStrictDeriv}.\\
 In order to show the equivalence between (iii) and (v), note that
 \[D^*\Psi(\xb,\yba)(v^*)=\{(-\nabla F(\xb)^Tv^*,u)\mv u\in D^*(\partial g)^{-1}(\yba,\yb)(v^*)\},\ v^*\in\R^m.\]
  Since $D^*(\partial g)^{-1}(\yba,\yb)(v^*)=\{u\mv -v^*\in D^*(\partial g)(\yb,\yba)(-u)\}$, the Mordukhovich criterium tells us that (v) holds if and only if (iii) is valid.\\
  Finally, in order to show the equivalence between (iv) and (vi), consider the mapping $\Gamma:\R^m\tto \R^n\times\R^m$ given by
  \[\Gamma(y^*):=\big(\nabla F(\xb)^Ty^*, (\partial g)^{-1}(y^*)-\yb\big),\ y^*\in\R^m.\]
By \cite[Exercise 10.43]{RoWe98} we obtain that
  \[\gph D_*\Gamma\big(\yba,(\xba,\bb)\big)= \big\{\big(v^*,(\nabla F(\xb)^Tv^*, v)\big)\mv (v^*,v)\in\gph D_*(\partial g)^{-1}(\yba,\yb)\big\}.\]
  Hence, by taking into account the equation $\gph D_*(\partial g)^{-1}(\yba,\yb)=\{(v^*,v)\mv (v,v^*)\in\gph D_*(\partial g)(\yb,\yba)\}$, condition (iv) is equivalent with  $\ker D_*\Gamma\big(\yba,(\xba,\bb)\big)=\{0\}$. Since $M_{\xb}=\Gamma^{-1}$, we conclude from Rockafellar's criterion in Theorem \ref{ThMordRoCrit}(ii) that statement (iv) is equivalent with  (vi). This completes the proof.
 \end{proof}
 \begin{definition}
 We will say that the {\em second-order qualification condition} (SOQC) holds for the composite function $g\circ F$ at $\xb$ for $\yba$  if any of the four equivalent conditions \eqref{EqSOQC_SCD}, \eqref{EqSOQC_quad}, \eqref{EqSOQC_Coder} or \eqref{EqSOQC_StrictDer} is fulfilled.
 \end{definition}
 The SOQC in terms of limiting coderivatives \eqref{EqSOQC_Coder} goes back to \cite{MoRo12}, where it was introduced in the form
 \[D^*(\partial g)(\yb,\yba)(0)\cap\ker \nabla F(\xb)^T=\{0\}.\]
 For the special case when $g$ is the indicator function of a closed convex set it was already employed in \cite{MoOut07}.
 It was shown in \cite{KhMoPh23} that  SOQC reduces to the linear independence constraint qualification fin case of the nonlinear programming problem.
 \begin{remark}\label{RemEquiv_SOQC_SCD}
   Given $(P,W)\in\M_{P,W}\partial g(\yb,\yba)$, an element $(0,v^*)\in\rge(P,W)$ has the representation $v^*=Wp$ with $Pp=0$. Hence $p=(I-P)p$ implying
   $v^*=W(I-P)p=(I-P)p\in \rge(I-P)=(\rge P)^\perp$ and we obtain the equivalence
   \begin{equation}
     \label{EqEquivSOQC_SCD}\eqref{EqSOQC_SCD}\Longleftrightarrow\Big(\ker \nabla F(\xb)^T\cap(\rge P)^\perp=\{0\}\ \forall (P,W)\in\M_{P,W}\partial g(\yb,\yba)\Big).
   \end{equation}
   Further, since $\rge P=\dom q_{P,W}$, we infer from \eqref{EqQuad_PW_Equiv} that
   \begin{equation}
     \label{EqEquivSOQC_quad}\eqref{EqSOQC_quad}\Longleftrightarrow\Big(\ker \nabla F(\xb)^T\cap(\dom q)^\perp=\{0\}\ \forall q\in{\rm quad\,}g(\yb,\yba)\Big).
   \end{equation}
 \end{remark}
 The SOQC  is persistent with respect to small perturbations in $\xb,\yba$ and $F$. We confine ourselves with the following result.
 \begin{proposition}\label{PropSOQC_stable}
   Assume that the SOQC  holds at $\xb$ for $\yba$. Then there is a neighborhood ${\cal V}\subset\R^n\times\R^m\times\R^m$ of $(\xb,\yba,\bb)$ such that for all $(\tilde x,\tilde y^*,b)\in {\cal V}$ satisfying $\tilde y^*\in\partial g(F(\tilde x)+b)$ one has
   \[\Big(\nabla F(\tilde x)^Tv^*=0,\ (0,v^*)\in L\ \Rightarrow\ v^*=0\Big)\quad\mbox{for all }L\in\Sp(\partial g)(F(\tilde x)+b,\tilde y^*),\]
   i.e., the SOQC holds for the perturbed mapping $g\circ F_b$ at $\tilde x$ for $\tilde y^*$ with $F_b:\R^n\to\R^m$ given by
   \begin{equation}\label{EqF_b}F_b(x):=F(x)+b\end{equation}.
 \end{proposition}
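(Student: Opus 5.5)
The plan is to argue by contradiction, using the coderivative form \eqref{EqSOQC_Coder} of the SOQC and the robustness of limiting normal cones under graphical limits. Suppose no such neighborhood exists. Then there are sequences $(\tilde x_k,\tilde y_k^*,b_k)\to(\xb,\yba,\bb)$ with $\tilde y_k^*\in\partial g(F(\tilde x_k)+b_k)$, subspaces $L_k\in\Sp(\partial g)(F(\tilde x_k)+b_k,\tilde y_k^*)$ and vectors $v_k^*\neq 0$ with $\nabla F(\tilde x_k)^Tv_k^*=0$ and $(0,v_k^*)\in L_k$. Since $L_k$ is a subspace, we may normalize so that $\norm{v_k^*}=1$. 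Passing to a subsequence, $v_k^*\to v^*$ with $\norm{v^*}=1$. Continuity of $\nabla F$ gives $\nabla F(\xb)^Tv^*=0$. Set $y_k:=F(\tilde x_k)+b_k$; then $(y_k,\tilde y_k^*)\to(\yb,\yba)$ along $\gph\partial g$.

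Next I pass from $L_k$ to coderivatives. By Proposition \ref{PropSCDproxreg}, which applies since convex \lsc $g$ is prox-regular and subdifferentially continuous everywhere, each $L_k$ is self-adjoint. So $(0,v_k^*)\in L_k=L_k^*$, and by \eqref{EqIncl1} we get $(0,v_k^*)\in\gph D^*(\partial g)(y_k,\tilde y_k^*)$. By definition of the coderivative this means $(v_k^*,0)\in N_{\gph\partial g}(y_k,\tilde y_k^*)$. The limiting normal cone mapping has closed graph: it is the outer limit of regular normals, and a diagonal argument shows the limit of limiting normals at converging points is again a limiting normal. Hence $(v^*,0)\in N_{\gph\partial g}(\yb,\yba)$, that is, $v^*\in D^*(\partial g)(\yb,\yba)(0)$.

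Together with $\nabla F(\xb)^Tv^*=0$ and $v^*\neq0$, this contradicts \eqref{EqSOQC_Coder}, which is equivalent to the SOQC \eqref{EqSOQC_SCD} by Theorem \ref{ThSOQC}. The final remark in the statement is immediate, because $\nabla F_b=\nabla F$ and $F_b(\tilde x)=F(\tilde x)+b$.

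The only step needing care is the closedness of $\gph N_{\gph\partial g}$ under convergence of base points in the graph. This is standard (Rockafellar--Wets, Proposition 6.6), with local closedness of $\gph\partial g$ following from convexity. An alternative would be closedness of $\Sp(\partial g)$ via the compact-valued, outer semicontinuous B-differential of $\prox g$. However, that route only yields a limit $L\in\Sp(\partial g)(\yb,\yba)$ if one also controls the subspace limits, so the coderivative route is cleaner.
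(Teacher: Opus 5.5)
Your proof is correct, but you pass to the limit in a different object than the paper does.

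The paper never leaves the SC derivative. After normalizing $v_k^*$, it uses compactness of $\Z_m$ to extract $L_k\to L$. It then obtains $(0,v^*)=\lim_k P_{L_k}(0,v_k^*)=P_L(0,v^*)\in L$ from convergence of the orthogonal projections. Finally, closedness of the graph of the SC derivative \cite[Lemma 3.13]{GfrOut22} gives $L\in\Sp(\partial g)(\yb,\yba)$, which contradicts \eqref{EqSOQC_SCD} directly.

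You instead push $L_k=L_k^*$ into $\gph D^*(\partial g)(y_k,\tilde y_k^*)$ via \eqref{EqIncl1}. You then use outer semicontinuity of the limiting normal cone and contradict \eqref{EqSOQC_Coder}.

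\textbf{What each route buys.} Your limiting step needs only textbook facts. However, you reach the contradiction only through the nontrivial implication \eqref{EqSOQC_SCD}$\Rightarrow$\eqref{EqSOQC_Coder} of Theorem \ref{ThSOQC}, which rests on convexity of $g$ via the convex-hull inclusion \eqref{EqInclCoderiv}. The paper's argument uses no such equivalence. It would go through unchanged for any SCD mapping in place of $\partial g$, even one where the SC and coderivative forms of the condition differ.

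\textbf{Your closing remark.} Your assessment of the alternative route is off. There are no subspace limits to control beyond what compactness of $\Z_m$ gives for free. That route is exactly the paper's proof, and it is no longer than yours.
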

 \begin{proof}Assume on the contrary that there are  sequences $(x_k,y_k^*,b_k)\to(\xb,\yba,\bb)$, $L_k\in\Z_m$ and  $0\not=v_k^*\in \R^m$ satisfying $y_k^*\in\partial g(F(x_k)+b_k)$, $L_k\in \Sp(\partial g)(F(x_k)+b_k,y_k^*)$, $(0,v_k^*)\in L_k$ and  $\nabla F(x_k)^Tv_k^*=0$ for all $k$. We may assume that $\norm{v_k^*}=1$, and since the metric space $\Z_m$ is compact, cf. \cite[Lemma 3.1]{GfrOut22}, after possibly passing to a subsequence we may suppose that $L_k$ converges to some subspace $L\in\Z_m$ and that $v_k^*$ converges to some $v^*\in\R^m$ with $\norm{v^*}=1$. Since the orthogonal projections $P_{L_k}$ onto $L_k$ converge to the orthogonal projection $P_L$ onto $L$, we conclude that
 \[(0,v^*)=\lim_{k\to\infty}(0,v_k^*)=\lim_{k\to\infty}P_{L_k}(0,v_k^*)=P_L(0,v^*)\]
 showing $(0,v^*)\in L$. Further, since $\nabla F(\xb)^Tv^*=\lim_{k\to\infty}\nabla F(x_k)^Tv_k^*=0$ and $L\in\Sp(\partial g)(\yb,\yba)$ by \cite[Lemma 3.13]{GfrOut22}, we obtain a contradiction to the condition \eqref{EqSOQC_SCD} defining SOQC.
 \end{proof}
 \begin{lemma}\label{LemBCQ}
   Assume that the SOQC holds at $\xb$ for $\yba$. Then the {\em basic constraint qualification}
   \begin{equation}\label{EqBQC}
     \nabla F(\xb)^Tv^*=0,\ v^*\in N_{\dom g}(F(\xb))\ \Rightarrow v^*=0
   \end{equation}
   also holds, implying that there is an open neighborhood $\U$ of $(\xb,\bb)$ such that for every $(x,b)\in \U$ with $F_b(x)\in\dom g$ one has
   \begin{equation}\label{EqBQC1}
     \nabla F_b(x)^Tv^*=0,\ v^*\in N_{\dom g}(F_b(x))\ \Rightarrow v^*=0
   \end{equation}
   In particular, for every such $(x,b)$ one has $\partial(g\circ F_b)(x)=\nabla F_b(x)^T\partial g(F_b(x))$ and,  if $F$ is twice continuously differentiable around  $\xb$, the composite function $g\circ F_b$ is prox-regular and subdifferentially continuous at $x$.
 \end{lemma}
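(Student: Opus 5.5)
The plan is to show first that the horizon normals of the domain are contained in the set of $v^*$ excluded by the SOQC, and then to get the local statements by a standard perturbation argument.

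For the first step, the key inclusion is
\[N_{\dom g}(\yb)\subset D^*(\partial g)(\yb,\yba)(0).\]
Given this, the basic constraint qualification \eqref{EqBQC} follows at once from the coderivative form \eqref{EqSOQC_Coder} of the SOQC, which is available by Theorem \ref{ThSOQC}. To prove the inclusion for convex $g$, take $v^*\in N_{\dom g}(\yb)$. Since $\dom g$ is convex, $\dom g\subset \dom\partial g$ up to closure, and $\partial g(y)+N_{\dom g}(y)=\partial g(y)$ for $y\in\dom \partial g$, we get $\yba+tv^*\in\partial g(\yb)$ for all $t\ge0$. Hence the ray $\{\yb\}\times(\yba+\R_+v^*)$ lies in $\gph\partial g$. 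Consequently $(0,v^*)\in T_{\gph\partial g}(\yb,\yba)$, and therefore $(0,v^*)\in\gph D_*(\partial g)(\yb,\yba)$, since the paratingent cone contains the tangent cone. This gives $v^*\in D_*(\partial g)(\yb,\yba)(0)$, and condition (iv) of Theorem \ref{ThSOQC}, i.e.\ \eqref{EqSOQC_StrictDer}, yields $v^*=0$ whenever $\nabla F(\xb)^Tv^*=0$. This route avoids having to compute normal cones to the graph.

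For the second step, I would derive \eqref{EqBQC1} by contradiction. Suppose there are $(x_k,b_k)\to(\xb,\bb)$ with $F_b(x_k):=F(x_k)+b_k\in\dom g$, and unit vectors $v_k^*\in N_{\dom g}(F(x_k)+b_k)$ with $\nabla F(x_k)^Tv_k^*=0$. Passing to a subsequence, $v_k^*\to v^*$ with $\norm{v^*}=1$. By continuity of $\nabla F$ we get $\nabla F(\xb)^Tv^*=0$. By outer semicontinuity of the normal cone to the closed convex set $\dom g$ we get $v^*\in N_{\dom g}(\yb)$. This contradicts \eqref{EqBQC}. Here one uses that $F(x_k)+b_k\to\yb$ and that the limiting normal cone mapping has closed graph. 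If $\dom g$ is not closed, one should work with $N_{\overline{\dom g}}$; for convex $g$ the normal cone to $\dom g$ at points of $\dom g$ coincides with that of its closure.

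For the final consequences I would invoke known results directly. The chain rule $\partial(g\circ F_b)(x)=\nabla F_b(x)^T\partial g(F_b(x))$ under the basic qualification \eqref{EqBQC1} is \cite[Theorem 10.6]{RoWe98}, which for convex $g$ uses $\partial^\infty g=N_{\dom g}$; equality holds because convex $g$ is regular. Prox-regularity and subdifferential continuity of $g\circ F_b$ at $x$ for $F$ of class $C^2$ follow from \cite[Proposition 13.32]{RoWe98}, since $g$ is convex and hence prox-regular and subdifferentially continuous, and $F_b$ is $C^2$ with the basic qualification at $x$.

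The main obstacle I expect is the first inclusion, specifically the claim $\yba+tv^*\in\partial g(\yb)$. It holds because $\partial g(\yb)$ is a closed convex set whose recession cone is $N_{\dom g}(\yb)$; this is standard and is just the identity $\partial g(\yb)+N_{\dom g}(\yb)=\partial g(\yb)$ valid for convex $g$.
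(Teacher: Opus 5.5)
Your proof is correct and follows the paper's. The same ray $\{\yba+\alpha v^*\mv \alpha\geq 0\}\subset\partial g(\yb)$ drives the first step: you read it as a tangent direction $(0,v^*)\in\gph D_*(\partial g)(\yb,\yba)$ and invoke \eqref{EqSOQC_StrictDer}, whereas the paper reads it as a ray inside $M_{\xb}(\xba,\bb)$ contradicting Theorem~\ref{ThSOQC}(vi). The perturbation step via outer semicontinuity of $N_{\dom g}$ and the appeal to amenability in \cite{RoWe98} are the same as in the paper. One small mismatch: you announce $N_{\dom g}(\yb)\subset D^*(\partial g)(\yb,\yba)(0)$ at the outset but never prove it, though this is harmless because your argument actually concludes through \eqref{EqSOQC_StrictDer}.
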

 \begin{proof}
   Assume on the contrary to \eqref{EqBQC} that there is some $0\not=v^*\in N_{\dom g}(\yb)$ satisfying $\nabla F(\xb)^Tv^*=0$. Then for every $\alpha\geq 0$ we have $\yba+\alpha v^*\in\partial g(\yb)$ implying $M_{\xb}(\xba,\bb)\supset \{\yba+\alpha v^*\mv\alpha\geq 0\}$. This contradicts statement (vi) of Theorem \ref{ThSOQC} and hence SOQC is also violated. Thus, the basic qualification condition \eqref{EqBQC} holds true. Now \eqref{EqBQC1} is an easy consequence of the relationships $\Limsup_{y\longsetto{\dom g}\yb}N_{\dom g}(y)\subset N_{\dom g}(\yb)$ and  $\Limsup_{x\to\xb}\ker \nabla F(x)^T\subset\ker \nabla F(\xb)^T$. Condition \eqref{EqBQC1} ensures that the composite function $g\circ F_b$ is
    amenable at $x$ in the sense of \cite[Definition 10.23]{RoWe98} and the assertions about the chain rule, prox-regularity and subdifferential continuity follow from \cite[Exercise 10.25, Theorem 10.6, Proposition 13.32]{RoWe98}.
 \end{proof}
The next result states uniqueness of multipliers together with some Lipschitz property. To this aim we define the multiplier mapping $M:\R^n\times\R^n\times\R^m\tto\R^m$ by
\begin{equation}
  \label{EqMultMap}M(x,x^*,b):=\{y^*\in \partial g(F(x)+b)\mv \nabla F(x)^Ty^*=x^*\}.
\end{equation}
\begin{theorem}\label{ThLipMult}
  Assume that SOQC holds at $\xb$ for $\yba$ and that $\nabla F$ is Lipschitz near $\xb$. Then there exists a neighborhood $\cal W$ of $(\xb,\xba,\bb)$ together with a constant $\kappa\geq 0$ such that for every $(x_i,x_i^*,b_i)\in\dom M\cap {\cal W}$ and every $y_i^*\in M(x_i,x_i^*,b_i)$, $i=1,2$, there holds
  \begin{equation}
    \norm{y_1^*-y_2^*}\leq\kappa\norm{(x_1,x_1^*,b_1)-(x_2,x_2^*,b_2)}.
  \end{equation}
  In particular, for every $(x,x^*,b)\in\dom M\cap {\cal W}$ the set $M(x,x^*,b)$ is a singleton.
\end{theorem}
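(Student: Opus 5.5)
Since any $y_i^*\in M(x_i,x_i^*,b_i)$ satisfies $y_i^*\in\partial g(F(x_i)+b_i)$ and $\nabla F(x_i)^Ty_i^*=x_i^*$, I would reduce the statement to Theorem \ref{ThSOQC}(vi) for the frozen mapping $M_{\xb}$. Concretely, set
\[\tilde x_i^*:=x_i^*+(\nabla F(\xb)-\nabla F(x_i))^Ty_i^*,\qquad \tilde b_i:=F(x_i)-F(\xb)+b_i .\]
Then $\nabla F(\xb)^Ty_i^*=\tilde x_i^*$ and $y_i^*\in\partial g(\yb+\tilde b_i)$, i.e. $y_i^*\in M_{\xb}(\tilde x_i^*,\tilde b_i)$. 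By Theorem \ref{ThSOQC}(vi) there are neighborhoods $\U^*\times\V$ of $(\xba,\bb)$ and $\Y^*$ of $\yba$ together with $\kappa'$ such that $M_{\xb}(\cdot)\cap\Y^*$ is single-valued and $\kappa'$-Lipschitz relative to its domain on $\U^*\times\V$. If all data lie in these neighborhoods, this gives
\[\norm{y_1^*-y_2^*}\leq\kappa'\big(\norm{\tilde x_1^*-\tilde x_2^*}+\norm{\tilde b_1-\tilde b_2}\big).\]
Next I would expand: $\norm{\tilde b_1-\tilde b_2}\le \mathrm{Lip}(F)\norm{x_1-x_2}+\norm{b_1-b_2}$, and
\[\tilde x_1^*-\tilde x_2^*=x_1^*-x_2^*+(\nabla F(\xb)-\nabla F(x_1))^T(y_1^*-y_2^*)+(\nabla F(x_2)-\nabla F(x_1))^Ty_2^* .\]
The middle term is at most $\epsilon\norm{y_1^*-y_2^*}$ with $\epsilon$ small once $x_1$ is near $\xb$, so it can be absorbed on the left side whenever $\kappa'\epsilon\le 1/2$. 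The last term is bounded by $\ell\norm{x_1-x_2}\norm{y_2^*}$, where $\ell$ is the Lipschitz constant of $\nabla F$ and $\norm{y_2^*}$ is bounded near $\yba$. This yields the estimate with $\kappa:=2\kappa'(1+\mathrm{Lip}(F)+\ell(\norm{\yba}+1))$. Taking $(x_1,x_1^*,b_1)=(x_2,x_2^*,b_2)$ gives single-valuedness.

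The main obstacle is localization. The theorem concerns all multipliers $y_i^*\in M(x_i,x_i^*,b_i)$, not just those near $\yba$, so I must show that every element of $M(x,x^*,b)$ with $(x,x^*,b)$ near $(\xb,\xba,\bb)$ lies in $\Y^*$, i.e. close to $\yba$. I would argue by contradiction in two steps.

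First, boundedness. Suppose $y_k^*\in M(x_k,x_k^*,b_k)$ with $\norm{y_k^*}\to\infty$. By convexity of $g$, limits of $y_k^*/\norm{y_k^*}$ lie in $\partial^\infty g(\yb)=N_{\dom g}(\yb)$, and such a limit $v^*$ satisfies $\nabla F(\xb)^Tv^*=0$ and $\norm{v^*}=1$. This contradicts the basic constraint qualification of Lemma \ref{LemBCQ}.

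Second, convergence. For bounded sequences, closedness of $\gph\partial g$ shows that any cluster point $\hat y^*$ satisfies $\hat y^*\in M_{\xb}(\xba,\bb)$. I still need $\hat y^*=\yba$, and local single-valuedness of $M_{\xb}$ near $\yba$ does not exclude a distinct far-away multiplier. Here I use convexity: $M_{\xb}(\xba,\bb)=\partial g(\yb)\cap\{y^*\mv \nabla F(\xb)^Ty^*=\xba\}$ is convex and contains $\yba$. A convex set that meets a neighborhood of $\yba$ only in $\{\yba\}$ must equal $\{\yba\}$, since otherwise the segment from $\yba$ to another element would enter that neighborhood. Hence $\hat y^*=\yba$, so for $\mathcal W$ small all multipliers lie in $\Y^*$ and the modified parameters $(\tilde x_i^*,\tilde b_i)$ lie in $\U^*\times\V$, which completes the argument.
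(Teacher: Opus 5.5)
Your proof is correct. Its first half coincides with the claim the paper proves first: multipliers are bounded because of the basic constraint qualification, and they can cluster only at $\yba$ because the convex set $M_{\xb}(\xba,\bb)$ meets a neighborhood of $\yba$ only in $\yba$.

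The Lipschitz estimate is obtained by a different route. The paper argues by contradiction. It normalizes $y_{1,k}^*-y_{2,k}^*$ and shows that the limit $v^*$ satisfies $(0,v^*)\in\gph D_*(\partial g)(\yb,\yba)$ and $\nabla F(\xb)^Tv^*=0$. This contradicts the strict-derivative form \eqref{EqSOQC_StrictDer} of SOQC. You instead take the Lipschitz localization of the frozen map $M_{\xb}$ from Theorem~\ref{ThSOQC}(vi) as a black box. You rewrite $y_i^*\in M_{\xb}(\tilde x_i^*,\tilde b_i)$ with shifted parameters and absorb $(\nabla F(\xb)-\nabla F(x_1))^T(y_1^*-y_2^*)$ into the left-hand side.

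Your route shows the result is just stability of a Lipschitz localization under a small $C^1$ perturbation of the linear map. It also gives an explicit $\kappa$ in terms of the modulus $\kappa'$ of that localization, the local Lipschitz constants of $F$ and $\nabla F$, and $\norm{\yba}$. The cost is bookkeeping: you must ensure $(\tilde x_i^*,\tilde b_i)\in\U^*\times\V$ and $\kappa'\epsilon\le 1/2$. The paper's route works directly with criterion (iv), needs no absorption step, but yields no explicit constant. Both arguments use Lipschitz continuity of $\nabla F$ at the same place, namely the cross term $(\nabla F(x_2)-\nabla F(x_1))^Ty_2^*$.

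One cosmetic correction: passing from $\norm{x_1-x_2}+\norm{x_1^*-x_2^*}+\norm{b_1-b_2}$ to the Euclidean product norm costs a factor such as $\sqrt3$. Your stated $\kappa$ should be enlarged accordingly.
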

\begin{proof}
  We claim  that for every sequence $(x_k,x_k^*,b_k,y_k^*)\in\gph M$ satisfying $(x_k,x_k^*,b_k)\to (\xb,\xba,\bb)$ there holds $y_k^*\to\yba$ as $k\to\infty$. Assume on the contrary that we can find a sequence $(x_k,x_k^*,b_k,y_k^*)\in\gph M$ satisfying $(x_k,x_k^*,b_k)\to (\xb,\xba,\bb)$ and $\norm{y_k^*-\yba}>\epsilon>0$ $\forall k$. Set $y_k:=F(x_k)+b_k\in(\partial g)^{-1}(y_k^*)$ $\forall k$. If the sequence $y_k^*$ possesses a subsequence converging  to some $\tilde y^*$, then $\tilde y^*\in\partial g(\yb)$ and  $\nabla F(\xb)^T\tilde y^*=\xba$ implying that $M_{\xb}(\xba,\bb)\supset\{\alpha \yba+(1-\alpha)\tilde y^*\mv \alpha\in[0,1]\}$ contradicting statement Theorem \ref{ThSOQC}(vi) because of $\norm{\tilde y^*-\yba}\geq\epsilon$. Thus, SOQC is also violated.\\
  If the sequence $y_k^*$ does not have any convergent subsequence, there holds $\norm{y_k^*}\to\infty$ and, by possibly passing to a subsequence, we may assume that the bounded sequence $v_k^*:=y_k^*/\norm{y_k^*}$ converges to some $v^*\in \partial^\infty g(\yb)=N_{\dom g}(\yb)$ with $\norm{v^*}=1$. Further,
  \[\nabla F(\xb)^Tv^*=\lim_{k\to\infty}\nabla F(x_k)^Tv_k^*=\lim_{k\to\infty}\frac{x_k^*}{\norm{y_k^*}}=0\]
  contradicting \eqref{EqBQC} and consequently also SOQC by Lemma \ref{LemBCQ}. Hence our claim holds true.

  We now prove the assertion of the proposition by contraposition. Assume on the contrary that we can find two sequences $(x_{i,k},x_{i,k}^*,b_{i,k}, y_{i,k}^*)\in\gph M$, $i=1,2$,  satisfying $\lim_{k\to\infty}(x_{i,k},x_{i,k}^*,b_{i,k})=(\xb,\xba,\bb)$, $i=1,2$, such that
  \begin{equation}\label{EqAuxLip}\norm{y_{1,k}^*-y_{2,k}^*}> k\norm{(x_{1,k},x_{1,k}^*,b_{1,k})-(x_{2,k},x_{2,k}^*,b_{2,k})}\ \forall k.\end{equation}
  Then we have $\norm{y_{1,k}^*-y_{2,k}^*}>0$ $\forall k$ and
   \[\lim_{k\to\infty}\frac{\norm{(x_{1,k},x_{1,k}^*,b_{1,k})-(x_{2,k},x_{2,k}^*,b_{2,k})}}{\norm{y_{1,k}^*-y_{2,k}^*}}=0.\]
  By possibly passing to a subsequence, we may assume that the sequence
  \[v_k^*:=\frac{y_{1,k}^*-y_{2,k}^*}{\norm{y_{1,k}^*-y_{2,k}^*}}\]
  converges to some $v^*$ with $\norm{v^*}=1$. Since $F$ is continuously differentiable, we have $F(x_{1,k})-F(x_{2,k})=\nabla F(\xb)(x_{1,k}-x_{2,k})+\oo(\norm{x_{1,k}-x_{2,k}})$. Setting $y_{i,k}:=F(x_{i,k})+b_{i,k}\in(\partial g)^{-1}(y_{i,k}^*)$ we obtain that
  \[\lim_{k\to\infty}\frac{y_{1,k}-y_{2,k}}{\norm{y_{1,k}^*-y_{2,k}^*}}= \lim_{k\to\infty}\frac{\nabla F(\xb)(x_{1,k}-x_{2,k})+\oo(\norm{x_{1,k}-x_{2,k}}) + b_{1,k}-b_{2,k}}{\norm{y_{1,k}^*-y_{2,k}^*}}=0.\]
  Since by our just proven claim we have $\lim_{k\to\infty}y_{i,k}^*=\yba$, $i=1,2$, the inclusion $(0,v^*)\in \gph D_*(\partial g)(\yb,\yba)$ follows.
  From the relationship
  \[x^*_{1,k}-x^*_{2,k}=\nabla F(x_{1,k})^T(y_{1,k}^*-y_{2,k}^*)+(\nabla F(x_{1,k})^T-\nabla F(x_{2,k})^T)y_{2,k}^*\]
  together with the assumed Lipschitz continuity of $\nabla F$ we infer that
  \[\nabla F(\xb)^Tv^*=\lim_{k\to\infty}\frac{\nabla F(x_{1,k})^T(y_{1,k}^*-y_{2,k}^*)}{\norm{y_{1,k}^*-y_{2,k}^*}}=\lim_{k\to\infty}\frac{x^*_{1,k}-x^*_{2,k}-(\nabla F(x_{1,k})^T-\nabla F(x_{2,k})^T)y_{2,k}^*}{\norm{y_{1,k}^*-y_{2,k}^*}}=0.\]
  Thus $v^*$ contradicts \eqref{EqSOQC_StrictDer} and consequently SOQC is also violated. The proof is complete.
\end{proof}
 We now state the following exact  chain rules  for the graphical derivative of $\partial(g\circ F_b)$, where $F_b:\R^n\to\R^m$ is given by \eqref{EqF_b}.

\begin{proposition}\label{PropChainRuleGraphDer}
  Assume that $F$ is twice continuously differentiable around $\xb$ and assume that SOQC holds at $\xb$ for $\yba$. Then there is a neighborhood ${\cal W}$ of $(\xb,\xba,\bb)$ such that for every $(\tilde x,\tilde x^*,b)\in {\cal W}$ verifying $\tilde x^*\in\partial (g\circ F_b)(\tilde x)$ there holds
  \begin{equation}\label{EqChainTanCone}T_{\gph \partial(g\circ F_b)}(\tilde x,\tilde x^*) = \{(u,\nabla^2\skalp{\tilde y^*,F_b}(\tilde x)u+\nabla F_b(\tilde x)^Tv^*)\mv (\nabla F_b(\tilde x)u,v^*)\in T_{\gph\partial g}(F_b(\tilde x),\tilde y^*)\}\end{equation}
  with $\{\tilde y^*\}=M(\tilde x,\tilde x^*,b)$, and therefore the second-order chain rule
  \[D(\partial (g\circ F_b))(\tilde x,\tilde x^*)(u)=\nabla^2\skalp{\tilde y^*,F_b}(\tilde x)u+\nabla F_b(\tilde x)^TD(\partial g)(F_b(\tilde x), \tilde y^*)(\nabla F_b(\tilde x)u),\ u\in\R^n\]
  holds true.
\end{proposition}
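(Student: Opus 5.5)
Choose ${\cal W}$ as the intersection of the neighborhoods from Proposition \ref{PropSOQC_stable}, Lemma \ref{LemBCQ} and Theorem \ref{ThLipMult}, shrunk so that $F$ is $C^2$ on the relevant $x$-region. For $(\tilde x,\tilde x^*,b)\in{\cal W}$ with $\tilde x^*\in\partial(g\circ F_b)(\tilde x)$, Lemma \ref{LemBCQ} gives $\partial(g\circ F_b)(x)=\nabla F_b(x)^T\partial g(F_b(x))$ near $\tilde x$. Hence
\[\gph\partial(g\circ F_b)=\{(x,\nabla F(x)^Ty^*)\mv y^*\in\partial g(F(x)+b)\},\]
and by Theorem \ref{ThLipMult} the multiplier $\tilde y^*=M(\tilde x,\tilde x^*,b)$ is unique. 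Moreover, the multiplier depends Lipschitzianly on $(x,x^*)$ along the graph near $(\tilde x,\tilde x^*)$. I then prove the two inclusions in \eqref{EqChainTanCone} separately. The derivative formula follows by reading off the graph.

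\emph{Inclusion ``$\subset$''.} Take $(u,w)\in T_{\gph\partial(g\circ F_b)}(\tilde x,\tilde x^*)$, so there are $t_k\downarrow0$ and $(u_k,w_k)\to(u,w)$ with $\tilde x+t_ku_k\in\dom$ and $\tilde x^*+t_kw_k\in\partial(g\circ F_b)(\tilde x+t_ku_k)$. Let $y_k^*$ be the unique multipliers. The Lipschitz estimate of Theorem \ref{ThLipMult} gives $\norm{y_k^*-\tilde y^*}\le\kappa t_k\norm{(u_k,w_k)}$, so along a subsequence $v_k^*:=(y_k^*-\tilde y^*)/t_k\to v^*$. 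Since $(F_b(\tilde x+t_ku_k),y_k^*)\in\gph\partial g$ and
\[\frac{F_b(\tilde x+t_ku_k)-F_b(\tilde x)}{t_k}\to\nabla F_b(\tilde x)u,\]
we get $(\nabla F_b(\tilde x)u,v^*)\in T_{\gph\partial g}(F_b(\tilde x),\tilde y^*)$. Expanding
\[w_k=\Big(\nabla F(\tilde x+t_ku_k)^Ty_k^*-\nabla F(\tilde x)^T\tilde y^*\Big)/t_k\]
by $C^2$-smoothness yields $w=\nabla^2\skalp{\tilde y^*,F_b}(\tilde x)u+\nabla F_b(\tilde x)^Tv^*$. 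The bound on the multiplier quotients is exactly what Theorem \ref{ThLipMult} supplies, so this direction is routine.

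\emph{Inclusion ``$\supset$''.} This is the main obstacle. Given $(\nabla F_b(\tilde x)u,v^*)\in T_{\gph\partial g}(F_b(\tilde x),\tilde y^*)$, we have sequences $t_k\downarrow0$ and $(y_k,y_k^*)\in\gph\partial g$ with $(y_k-F_b(\tilde x))/t_k\to\nabla F_b(\tilde x)u$ and $(y_k^*-\tilde y^*)/t_k\to v^*$. The task is to find $x_k=\tilde x+t_ku+o(t_k)$ with $F_b(x_k)=y_k$ exactly, or at least with $F_b(x_k)$ on the $\partial g$-graph and a nearby multiplier. In general $\nabla F_b(\tilde x)$ is not surjective, so exact solvability is not automatic. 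My plan is to use the metric regularity in Theorem \ref{ThSOQC}(v), transferred to the perturbed point $(\tilde x,\tilde y^*,b)$ via Proposition \ref{PropSOQC_stable} and the equivalence (i)$\Leftrightarrow$(v) applied to $F_b$ at $\tilde x$.

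Concretely, $\Psi(x,y^*)=(\partial g)^{-1}(y^*)-F_b(x)$ is metrically regular around $((\tilde x,\tilde y^*),0)$. The point $(\tilde x+t_ku,y_k^*)$ satisfies
\[\dist{0,\Psi(\tilde x+t_ku,y_k^*)}\le\norm{y_k-F_b(\tilde x+t_ku)}=o(t_k).\]
Metric regularity therefore gives $(x_k,\hat y_k^*)$ with $\hat y_k^*\in\partial g(F_b(x_k))$ and $\norm{(x_k,\hat y_k^*)-(\tilde x+t_ku,y_k^*)}=o(t_k)$. Then $(x_k,\nabla F_b(x_k)^T\hat y_k^*)\in\gph\partial(g\circ F_b)$. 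Its difference quotients converge to $(u,\nabla^2\skalp{\tilde y^*,F_b}(\tilde x)u+\nabla F_b(\tilde x)^Tv^*)$ by the same Taylor expansion as in the first inclusion.

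If the transfer of metric regularity to perturbed points needs care, I would instead argue via the coderivative condition \eqref{EqSOQC_Coder} at $(F_b(\tilde x),\tilde y^*)$, which holds by Proposition \ref{PropSOQC_stable} together with the (i)$\Leftrightarrow$(iii) argument of Theorem \ref{ThSOQC}.
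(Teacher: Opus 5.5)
Your proposal is correct and follows essentially the same route as the paper. The inclusion ``$\subset$'' comes from the Lipschitz multiplier estimate of Theorem \ref{ThLipMult}. The inclusion ``$\supset$'' comes from metric regularity of $(x,y^*)\tto(\partial g)^{-1}(y^*)-F_b(x)$ around $((\tilde x,\tilde y^*),0)$, obtained by applying Theorem \ref{ThSOQC}(v) to $F_b$ at the perturbed point via Proposition \ref{PropSOQC_stable}.

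The one detail to make explicit is how $\W$ is shrunk. Use $\norm{\tilde y^*-\yba}\le\kappa\norm{(\tilde x,\tilde x^*,b)-(\xb,\xba,\bb)}$ so that $(\tilde x,\tilde y^*,b)$ lands in the neighborhood of Proposition \ref{PropSOQC_stable}; this is exactly what the paper does. With that in place the transfer is legitimate, and your fallback via the coderivative condition is not needed.
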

\begin{proof}Let us choose an open neighborhood ${\cal W}$ of $(\xb,\xba,\bb)$  meeting the requirements of Theorem \ref{ThLipMult} and such that for all $(x,x^*,b)\in\W$ the pair $(x,b)$ belongs to the neighborhood $\U$ from Lemma \ref{LemBCQ}.
Consider $(\tilde x,\tilde x^*,b)\in {\cal W}$ verifying $\tilde x^*\in\partial (g\circ F_b)(\tilde x)=\nabla F_b(x)^T\partial g(F_b(x))$. By taking into account the equation $\dom M =\{(x,x^*,b)\mv x^*\in \nabla F_b(x)^T\partial g(F_b(x))\}$, we conclude from Theorem \ref{ThLipMult} that $M(\tilde x,\tilde x^*,b)=\{\tilde y^*\}$ is a singleton and $\norm{\tilde y^*-\yba}\leq \kappa \norm{(\tilde x,\tilde x^*,b)-(\xb,\xba,\bb)}$. Hence, by possibly shrinking $\W$ and taking into account Proposition \ref{PropSOQC_stable}, we may also assume that SOQC holds for the function $g\circ F_b$ at $\tilde x$ for $\tilde y^*$.

Now consider $(u,u^*)\in T_{\gph \partial(g\circ F_b)}(\tilde x,\tilde x^*)$ together with sequences $t_k\downarrow 0$ and $(u_k,u_k^*)\to(u,u^*)$ satisfying $(\tilde x+t_k u_k,\tilde x^*+t_k u_k^*)\in \gph \partial(g\circ F_b)$ for all $k$. For all $k$ sufficiently large we have $(\tilde x+t_ku_k,\tilde x^*+t_ku_k^*,b)\in\W$ and we infer from Theorem \ref{ThLipMult} that  the set $M(\tilde x+t_ku_k,\tilde x^*+t_ku_k^*,b)=\{y_k^*\}$ is a singleton  and $\norm{y_k^*-\tilde y^*}\leq \kappa t_k\norm{u_k,u_k^*,0}$. After possibly passing to a subsequence, we can assume that $v_k^*:=(y_k^*-\tilde y^*)/t_k$ converges to some $v^*$.
From $y_k^*=\tilde y+t_kv_k\in\partial g(F_b(\tilde x+t_ku_k))$ together with $F_b(\tilde x+t_ku_k)=F_b(\tilde x)+t_k\nabla F_b(\tilde x)u+\oo(t_k)$ we conclude that $(\nabla F_b(\tilde x)u,v^*)\in T_{\gph \partial q}(F_b(\tilde x),\tilde y^*)$. Since
\[x_k^*=\tilde x^*+t_ku_k^*=\nabla F_b(\tilde x+t_ku_k)^Ty_k^*=\nabla F_b(\tilde x)^T\tilde y^*+t_k(\nabla^2\skalp{\tilde y^*,F_b}(\tilde x)u_k+\nabla F_b(\tilde x)^Tv_k^*)+\oo(t_k),\]
it follows that $u^*= \nabla^2\skalp{\tilde y^*,F_b}(\tilde x)u+\nabla F_b(\tilde x)^Tv^*$ and hence $(u,u^*)$ belongs to the set on the right hand side of \eqref{EqChainTanCone}.

Next consider a pair $(u,v^*)$ satisfying $(\nabla F_b(\tilde x)u, v^*)\in T_{\gph \partial g}(F_b(\tilde x), \tilde y^*)$. Then there are sequences $t_k\downarrow 0$ and $(\triangle v_k,\triangle v_k^*)\to(0,0)$ such that $(F_b(\tilde x)+t_k(\nabla F_b(\tilde x)u+\triangle v_k), \tilde y^*+t_k(v^*+\triangle v_k^*))\in\gph \partial g$ for all $k$. Hence
\[(\partial g)^{-1}(\tilde y^*+t_k(v^*+\triangle v_k^*))-F_b(\tilde x+t_ku)\ni F_b(\tilde x)+t_k(\nabla F_b(\tilde x)u+\triangle v_k)-F_b(\tilde x+t_ku)=\oo(t_k)\]
and, since the mapping $(x,y^*)\tto (\partial g)^{-1}(y^*)-F_b(x)$ is metrically regular around $((\tilde x,\tilde y^*),0)$ by Proposition \ref{PropSOQC_stable} and Theorem \ref{ThSOQC}(v), we can find for every $k$ sufficiently large some $(u_k,v_k^*)$ such that $\norm{(u_k,v_k^*)-(u,v^*+\triangle v_k^*)}=\oo(t_k)/t_k$ and $0\in (\partial g)^{-1}(\tilde y^*+t_kv_k)-F_b(\xb+t_ku_k)$. Hence
\begin{align*}\partial (g\circ F_b)(\tilde x+t_ku_k)\ni \nabla F(\tilde x+t_ku_k)^T(\tilde y^*+t_kv_k^*)=\nabla F(\tilde x)^T\tilde y^*+t_k\big(\nabla^2\skalp{\tilde y^*,F}(\tilde x)u+\nabla F(\tilde x)^Tv^*\big)+\oo(t_k)\end{align*}
showing that $(u, \nabla^2\skalp{\tilde y^*,F}(\tilde x)u+\nabla F(\tilde x)^Tv^*)\in T_{\gph\partial(g\circ F_b)}(\tilde x,\tilde x^*)$. Thus, equality holds in \eqref{EqChainTanCone} and the proof is complete.
\end{proof}
Finally we also provide an exact chain rule for the SC derivative of $\partial(g\circ F)$.
\begin{proposition}\label{PropChainRuleSCD}
  Assume that $F$ is twice continuously differentiable around $\xb$ and that either $\nabla F(\xb)$ has full row rank $m$ or SOQC holds at $\xb$ for $\yba$ and $\Sp(\partial g)(F(\xb),\yba)=\{\bar S\}$ is a singleton. Then
  \begin{equation}
    \label{EqChainRuleSCD}
    \Sp(\partial(g\circ F))(\xb,\xba)=\Big\{\big\{\big(u,\nabla^2\skalp{\yba,F}(\xb)u+\nabla F(\xb)^Tv^*\big)\mv (\nabla F(\xb)u,v^*)\in S\big\}\mv S\in \Sp(\partial g)(\yb,\yba)\Big\}.
  \end{equation}
\end{proposition}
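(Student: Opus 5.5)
The plan is to compare tangent cones on the two graphs pointwise via Proposition \ref{PropChainRuleGraphDer}, and then pass to limits in the compact metric space $\Z_n$. For $x$ near $\xb$, $y^*\in\R^m$ and a subspace $S\subset\R^m\times\R^m$ we set
\[\Phi_{x,y^*}(S):=\big\{\big(u,\nabla^2\skalp{y^*,F}(x)u+\nabla F(x)^Tv^*\big)\mv (\nabla F(x)u,v^*)\in S\big\}.\]
The claim \eqref{EqChainRuleSCD} then reads $\Sp(\partial(g\circ F))(\xb,\xba)=\{\Phi_{\xb,\yba}(S)\mv S\in\Sp(\partial g)(\yb,\yba)\}$.

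\emph{Step 1: dimension and continuity of $\Phi$.} I first show that $\Phi_{x,y^*}(S)$ is an $n$-dimensional subspace whenever $S\in\Z_m$ is self-adjoint and satisfies the SOQC-type condition $\big(\nabla F(x)^Tv^*=0,\ (0,v^*)\in S\Rightarrow v^*=0\big)$. Write $S=\rge(P,W)$ as in \eqref{EqPW} and consider the linear map $(u,p)\mapsto\big(u,\nabla^2\skalp{y^*,F}(x)u+\nabla F(x)^TWp\big)$ on the space $\{(u,p)\mid \nabla F(x)u=Pp\}$. Its kernel is trivial by the SOQC condition, and a dimension count gives dimension $n$; this count should use self-adjointness of $S$, or equivalently symmetry of the generalized Hessian. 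Under full row rank of $\nabla F(x)$ the condition is automatic, since $\ker\nabla F(x)^T=\{0\}$.

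By Proposition \ref{PropSOQC_stable} the condition persists near $(\xb,\yba)$ for all $S$ in the SC derivatives there. Once the dimension is constant equal to $n$, the map $(x,y^*,S)\mapsto\Phi_{x,y^*}(S)$ is continuous into $\Z_n$, because it is the image of a continuously varying family of full-rank bases.

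\emph{Step 2: full row rank case.} Here $v^*$ is uniquely determined by $(u,u^*)$, and $u\mapsto\nabla F(x)u$ is onto. Thus Proposition \ref{PropChainRuleGraphDer} yields $T_{\gph\partial(g\circ F)}(x,x^*)=\Phi_{x,y^*}\big(T_{\gph\partial g}(F(x),y^*)\big)$, with a linear bijection between these cones. Consequently $(x,x^*)\in\OO_{\partial(g\circ F)}$ if and only if $(F(x),y^*)\in\OO_{\partial g}$; the direction "$\Leftarrow$" uses Step 1, and the converse is a dimension count using the bijection.

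For "$\subset$", take $(x_k,x_k^*)\to(\xb,\xba)$ in $\OO_{\partial(g\circ F)}$ with tangent spaces converging to $L$. By Theorem \ref{ThLipMult} the multipliers satisfy $y_k^*\to\yba$. Passing to a subsequence, $T_{\gph\partial g}(F(x_k),y_k^*)\to S\in\Sp(\partial g)(\yb,\yba)$, and Step 1 gives $L=\Phi_{\xb,\yba}(S)$.

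For "$\supset$", take $(y_k,y_k^*)\to(\yb,\yba)$ in $\OO_{\partial g}$ with tangent spaces converging to $S$. Surjectivity of $\nabla F(\xb)$ and the implicit function theorem provide $x_k\to\xb$ with $F(x_k)=y_k$. Setting $x_k^*:=\nabla F(x_k)^Ty_k^*$, the points $(x_k,x_k^*)$ lie in $\OO_{\partial(g\circ F)}$ and converge to $(\xb,\xba)$, and continuity of $\Phi$ gives the limit $\Phi_{\xb,\yba}(S)$.

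\emph{Step 3: SOQC and singleton case.} This is the main obstacle. A point of $\OO_{\partial(g\circ F)}$ need not correspond to a point of $\OO_{\partial g}$, since the chain rule only gives that the image of the cone is a subspace. I would instead use the singleton assumption through the cited characterization of strict proto-differentiability \cite{Gfr25c,GfrOut25_prepr}: $\Sp(\partial g)(\yb,\yba)=\{\bar S\}$ implies that $\partial g$ is strictly proto-differentiable at $(\yb,\yba)$, so $T^P_{\gph\partial g}(\yb,\yba)=\bar S$.

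It follows that every tangent cone $T_{\gph\partial g}(y,y^*)$ at nearby graph points lies within a small neighborhood of $\bar S$, in the sense that its normalized elements are close to $\bar S$. Combining this with Proposition \ref{PropChainRuleGraphDer} and the uniform Lipschitz multiplier estimate of Theorem \ref{ThLipMult}, any $n$-dimensional tangent space $T_{\gph\partial(g\circ F)}(x_k,x_k^*)$ with $(x_k,x_k^*)\to(\xb,\xba)$ has all its limit directions in $\Phi_{\xb,\yba}(\bar S)$. Since both subspaces have dimension $n$ (Step 1), any limit $L$ equals $\Phi_{\xb,\yba}(\bar S)$.

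Nonemptiness of $\Sp(\partial(g\circ F))(\xb,\xba)$ follows from Proposition \ref{PropSCDproxreg}, because $g\circ F$ is prox-regular and subdifferentially continuous by Lemma \ref{LemBCQ}. Hence both sides of \eqref{EqChainRuleSCD} equal $\{\Phi_{\xb,\yba}(\bar S)\}$. The delicate part is controlling the $\oo(t)$ terms uniformly along the sequence, which the Lipschitz bound of Theorem \ref{ThLipMult} should handle.
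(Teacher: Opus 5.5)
Your proof is correct. In the full-row-rank case it is essentially the paper's argument: the equivalence $(x,x^*)\in\OO_{\partial(g\circ F)}\Leftrightarrow(F(x),y^*)\in\OO_{\partial g}$ from Proposition~\ref{PropChainRuleGraphDer}, passage to limits for ``$\subset$'', and lifting $y_k=F(x_k)$ for ``$\supset$''. Your continuity of $\Phi$ plays the role of the paper's ``$\hat L\subset L$ and $\dim\hat L=n$''.

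In the case of SOQC with $\Sp(\partial g)(\yb,\yba)=\{\bar S\}$ you take a genuinely different route. The paper argues on the dual side. It takes $\gph D^*(\partial g)(\yb,\yba)=\bar S$ from \cite[Corollary 3.3]{Gfr25c} and applies the coderivative chain rule \cite[Theorem 3.3]{MoRo12} to get $\gph D^*(\partial(g\circ F))(\xb,\xba)\subset\bar L$. It then uses $L=L^*\subset\gph D^*(\partial(g\circ F))(\xb,\xba)$ from \eqref{EqIncl1} and Proposition~\ref{PropSCDproxreg}. It only needs the upper bound $\dim\bar L\le n$, with equality forced by nonemptiness of the SC derivative.

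You argue on the primal side. Three ingredients yield $L\subset\Phi_{\xb,\yba}(\bar S)$: $T^P_{\gph\partial g}(\yb,\yba)=\bar S$, the exact tangent-cone formula of Proposition~\ref{PropChainRuleGraphDer} at each nearby point, and $\Limsup T_{\gph\partial g}(y,y^*)\subset T^P_{\gph\partial g}(\yb,\yba)$ as $(y,y^*)\to(\yb,\yba)$ in $\gph\partial g$. Your exact dimension count then closes the argument; self-adjointness of $\bar S$ enters there, through the equivalence of \eqref{EqSOQC_SCD} with $\rge\nabla F(\xb)+\rge P=\R^m$. Your version dispenses with the external coderivative chain rule and handles both cases with one limiting mechanism. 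The paper's version is shorter given that machinery and never needs the exact dimension of $\bar L$.

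Two details should be made explicit. First, the real issue in Step~3 is not uniform control of $\oo(t)$ terms, since Proposition~\ref{PropChainRuleGraphDer} is exact at each point. It is boundedness of the $v_k^*$ in $u_k^*=\nabla^2\skalp{y_k^*,F}(x_k)u_k+\nabla F(x_k)^Tv_k^*$. If $\norm{v_k^*}\to\infty$, any limit $(0,w^*)$ of $(\nabla F(x_k)u_k,v_k^*)/\norm{v_k^*}$ lies in $\bar S$ with $\nabla F(\xb)^Tw^*=0$ and $\norm{w^*}=1$, contradicting \eqref{EqSOQC_SCD}.

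Second, the continuity of $(x,y^*,S)\mapsto\Phi_{x,y^*}(S)$ in Step~1 cannot be read off from the $(P,W)$-bases, because $S\mapsto(P,W)$ is discontinuous on $\Z_m$ (e.g.\ $\rge(1,k)\to\rge(0,1)$ in $\Z_1$). The same normalization argument does, however, show $\Phi_{x_k,y_k^*}(S_k)\to\Phi_{\xb,\yba}(S)$ whenever SOQC holds at the limit and all these subspaces are $n$-dimensional.
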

\begin{proof} We begin with the case when $\nabla F(\xb)$ has full row rank $m$. Consider $(x,x^*)\in \partial(g\circ F)$ such that $(x,x^*,0)$ belongs to the neighborhood $\W$ of Proposition \ref{PropChainRuleGraphDer} and $\nabla F(x)$ has full row rank $m$. Let $y^*$ be the unique element in $M(x,x^*,0)$.  Any $u$ satisfying $\nabla F(x)u=v$ can be written in the form $u=\nabla F(x)^\dag v+p$ with $p\in\ker \nabla F(x)$ and, by identifying $\R^n\times\R^n$ with $\R^{2n}$, we obtain from \eqref{EqChainTanCone} together with the surjectivity of $\nabla F(x)$, that
\begin{align}\label{EqAux1ChRule}T_{\gph\partial(g\circ F)}(x,x^*)=\Big\{ A(x,y^*)\myvec{v\\v^*}+B(x,y^*)p\mv (v,v^*)\in T_{\gph \partial g}(F(x),y^*),\ p\in \ker\nabla F(x)\Big\}\end{align}
with
\[A(x,y^*):=\begin{pmatrix}\nabla F(x)^\dag &0\\\nabla^2\skalp{y^*,F}(x)\nabla F(x)^\dag& \nabla F(x)^T\end{pmatrix},\ B(x,y^*)=\begin{pmatrix}I_n\\\nabla^2\skalp{y^*,F}(x)\end{pmatrix}.\]
We see that $T_{\gph\partial(g\circ F)}(x,x^*)$ is a subspace, whenever $(F(x),y^*)\in\OO_{\partial g}$, and its dimension must be $n$, because $g\circ F$ is prox-regular and subdifferentially continuous at $x$. Likewise,
\[T_{\gph \partial g}(F(x),y^*)=\Big\{C(x,y^*)\myvec{u\\u^*}\mv (u,u^*)\in T_{\gph\partial(g\circ F)}(x,x^*) \Big\}\]
with
\[C(x,y^*)= \begin{pmatrix}
  \nabla F(x)&0\\-{\nabla F(x)^T}^\dag\nabla^2\skalp{\tilde y^*,F}(x)&{\nabla F(x)^T}^\dag\end{pmatrix}\]
  and therefore $(F(x),y^*)\in\OO_{\partial g}$ whenever $(x,x^*)\in\OO_{\partial(g\circ F)}$. Hence $(x,x^*)\in \OO_{\partial(g\circ F)}$ if and only if $(F(x),y^*)\in\OO_{\partial g}$.

  Next choose $L\in\Sp(\partial(g\circ F))(\xb,\xba)$ together with a sequence $(x_k,x_k^*)\longsetto{\OO_{\partial(g\circ F)}}(\xb,\xba)$ such that
  $L_k:=T_{\gph\partial(g\circ F)}(x_k,x_k^*)\to L$ in $\Z_n$. For all $k$ sufficiently large let $S_k:=T_{\gph \partial g}(F(x_k),y_k^*)$,   $\{y_k^*\}=M(x_k,x_k^*,0)$, denote the corresponding subspaces in $\Z_m$. Since $\Z_m$ is compact, by possibly passing to a subsequence we may assume that $S_k$ converges to some $S\in\Sp(\partial g)(\yb,\yba)$ and we can define the subspace
  \[\hat L:=\big\{ A(\xb,\yba)\myvec{v\\v^*}+B(\xb,\yba)p\mv (v,v^*)\in S,\ p\in \ker\nabla F(\xb)\big\}.\]
 In view of \eqref{EqAux1ChRule} we have $\hat L\subset L$ and in order to show equality we must verify that $\dim \hat L=\dim L(=n)$. Consider the linear mapping $H:S\times \ker \nabla F(\xb)\to\R^{2n}$ given by
 \[H\big((v,v^*),p \big):= A(\xb,\yba)\myvec{v\\v^*}+B(\xb,\yba)p=\myvec{\nabla F(\xb)^\dag v + p\\\nabla^2\skalp{\yba,F}(\xb)(\nabla F(\xb)^\dag v +p)+ \nabla F(\xb)^T v^*}\]
 and  $\big((v,v^*),p \big)\in S\times \ker F(\xb)$ satisfying $H\big((v,v^*),p \big)=0$.
 Since $\rge \nabla F(\xb)^\dag =(\ker \nabla F(\xb))^\perp$ and $\nabla F(x)^\dag$ has full column rank $m$, we obtain that $v=0$ and $p=0$. Hence $\nabla F(\xb)^Tv^*=0$ and this is only possible when $v^*=0$. We infer that $H$ is injective and, together with $\hat L=H(S\times\ker F(\xb))$ and $\dim (S\times\ker F(\xb))=m+(n-m)=n$, we infer that $\dim \hat L=n$ and consequently $\hat L=L$.
  This verifies the inclusion ``$\subset$'' in \eqref{EqChainRuleSCD}.

 Now consider a subspace $S\in\Sp(\partial g)(\yb,\yba)$ together with a sequence $(y_k,y_k^*)\longsetto{\OO_{\partial g}}(\yb,\yba)$ such that $S_k:= T_{\gph\partial g}(y_k,y_k^*)$ converges to $S$. The full rank assumption on $\nabla F(\xb)$ ensures that for all $k$ sufficiently large we can find some $x_k$ such that $F(x_k)=y_k$ so that the sequence $x_k$ converges to $\xb$. By setting $x_k^*=\nabla F(x_k)y_k^*$, we conclude that $(x_k,x_k^*)\in\OO_{\partial(g\circ F)}$ and, after possibly passing to a subsequence, we may assume that the subspaces
 \[L_k:=T_{\gph\partial(g\circ F)}(x_k,x_k^*)=\big\{A(x_k,y_k^*)\myvec{v\\v^*}+B(x_k,y_k^*)p\mv (v,v^*)\in S_k,\ p\in \ker\nabla F(x_k)\big\}\]
 converge to some $L\in\Sp(\partial(g\circ F))(\xb,\xba)$. The same arguments as before yield that $L=H(S\times\ker F(\xb))$
 and  the inclusion ``$\supset$'' in \eqref{EqChainRuleSCD} also follows. This completes the proof of the case when $\nabla F(\xb)$ has full row rank $m$.

 We proceed with the second case when $\Sp(\partial g)(\yb,\yba)=\{\bar S\}$ is a singleton. By \cite[Corollary 3.3]{Gfr25c} we infer that $\gph D^*(\partial g)(\yb,\yba)=\bar S$ and hence  the inclusion
 \[\gph D^*(\partial(g\circ F))(\xb,\xba)\subset \{(u, \nabla^2\skalp{\yba,F}(\xb)u+\nabla F(\xb)^Tv^*)\mv (\nabla F(\xb)u,v^*)\in \bar S\}=:\bar L\]
 follows from \cite[Theorem 3.3]{MoRo12}. Clearly, $\bar L$ is a subspace and we will now show that $\dim\bar L\leq n$. Taking $\tilde S=\bar S\cap(\rge \nabla F(\xb)\times \R^m)$, we conclude that
 \[\bar L=\{(\nabla F(\xb)^\dag v+p, \skalp{\yba,F}(\xb)(\nabla F(\xb)^\dag v+p)+\nabla F(\xb)^Tv^*)\mv (v,v^*)\in\tilde S, p\in\ker \nabla F(\xb)\}.\]
 Assuming that $\dim\bar L>n$, we can find $n+1$ elements $\big((v_i,v_i^*),p_i\big)\in \tilde S$ such that
 \[(u_i,u_i^*):=(\nabla F(\xb)^\dag v_i+p_i, \skalp{\yba,F}(\xb)(\nabla F(\xb)^\dag v_i+p_i)+\nabla F(\xb)^Tv_i^*),\ i=1,\ldots,n+1\]
 are linearly independent. Since $\dim\rge\nabla F(\xb)^\dag + \dim\ker\nabla F(\xb)=n$, we can find scalars $\alpha_1,\ldots,\alpha_{n+1}$ with $\sum_{i=1}^{n+1}\alpha_iu_i=0$ and $\bar u^*:=\sum_{i=1}^{n+1}\alpha_iu_i^*\not=0$. Setting $\big((\bar v,\bar v^*),\bar p\big):=\sum_{i=1}^{n+1}\alpha_i\big((v_i,v_i^*),p_i\big)$, it follows that $\bar u^*=\nabla F(\xb)^T\bar v^*$, $(\bar v,\bar v^*)\in\tilde S$, $\bar p\in\ker\nabla F(\xb)$ and $\nabla F(\xb)^\dag\bar v+\bar p=0$. Since $\rge \nabla F(\xb)^\dag=(\ker\nabla F(\xb))^\perp$, we conclude that $\bar p=0$ and $\nabla F(\xb)^\dag\bar v=0$, the latter implying $\bar v=0$ because of $\bar v\in\rge\nabla F(\xb)$. Thus $(\bar v,\bar v^*)=(0,\bar v^*)\in S$ and $\nabla F(\xb)^T\bar v^*=\bar u^*\not=0$, contradicting condition \eqref{EqSOQC_SCD} defining SOQC. Hence $\dim\bar L\leq n$ and
 since $\Sp(\partial (g\circ F))(\xb,\xba)$ is nonempty, we conclude from \eqref{EqIncl1} that $\dim\bar L=n$ and $\Sp(\partial (g\circ F))(\xb,\xba)=\{\bar L\}$. This completes the proof.
\end{proof}

\section{Characterizations of the Aubin property of the KKT mapping}

Throughout this section we will suppose that  the following assumption is fulfilled.
 \begin{assumption}\label{Ass2}
 In addition to Assumption \ref{AssA1}, the mappings $F:\R^n\to\R^m$ and $f:\R^n\to\R$ are twice continuously differentiable around the reference point $\xb$ and the first-order necessary optimality condition
\[\nabla_x\lag(\xb,\yba)=0\]
is fulfilled.
\end{assumption}

We start our analysis with the Mordukhovich criterion for the Aubin property of $\SKKT$. Note that the KKT conditions \eqref{EqKKTpert} can be written in the form
\begin{equation}\label{EqPsi}
(a^*,b)\in G(x,y^*):=\big(\nabla_x\lag(x,y^*), -F(x)\big)+\{0\}\times(\partial g)^{-1}(y^*)
\end{equation}
and therefore $\SKKT=G^{-1}$.
By taking into account \cite[Exercise 10.43]{RoWe98} and the equations
 \begin{align*}D^*(\partial g)^{-1}(\yba,\yb)(v^*)&=\{v\mv -v^*\in D^*(\partial g)(\yb,\yba)(-v)\},
 \end{align*}
we obtain that
\begin{align}\label{EqCoderivG}D^*G\big((\xb,\yba),(\aba,\bb)\big)(u,v^*)=\big\{(\nabla^2_{xx}\lag(\xb,\yba)u-\nabla F(\xb)^Tv^*, \nabla F(\xb)u+v)\mv -v^*\in D^*(\partial g)(\yb,\yba)(-v)\big\}.
\if{\\D_*G\big((\xb,\yba),(\aba,\bb)\big)(u,v^*)=\big\{(\nabla^2_{xx}\lag(\xb,\yba)u+\nabla F(\xb)^Tv^*, -\nabla F(\xb)u+w)\mv v^*\in D_*(\partial g)(\yb,\yba)(w)\big\}.}\fi
\end{align}
\begin{lemma}
\label{LemMord}$\SKKT$ has the Aubin property around $\big((\aba,\bb),(\xb,\yba)\big)$ if and only if
\begin{equation}\label{EqMord}\nabla^2_{xx}\lag(\xb,\yba)u+\nabla F(\xb)^Tv^*=0,\ v^*\in D^*(\partial g)(\yb,\yba)(\nabla F(\xb)u)\ \Rightarrow\ (u,v^*)=(0,0).
\end{equation}
\end{lemma}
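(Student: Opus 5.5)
Since $\SKKT=G^{-1}$ with $G$ from \eqref{EqPsi}, the Aubin property of $\SKKT$ around $\big((\aba,\bb),(\xb,\yba)\big)$ is the same as metric regularity of $G$ around $\big((\xb,\yba),(\aba,\bb)\big)$. We apply the Mordukhovich criterion, Theorem \ref{ThMordRoCrit}(i), to $G$. First we must check that $\gph G$ is locally closed around the reference point. This holds because $\gph G$ is the image of the closed set $\R^n\times\gph(\partial g)^{-1}$ under a continuous change of variables. Indeed, $(x,y^*,a^*,b)\in\gph G$ iff $a^*=\nabla_x\lag(x,y^*)$ and $(y^*,b+F(x))\in\gph(\partial g)^{-1}$, and $\gph\partial g$ is closed since $g$ is \lsc convex. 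Hence metric regularity of $G$ is equivalent to the implication
\[0\in D^*G\big((\xb,\yba),(\aba,\bb)\big)(u,v^*)\ \Rightarrow\ (u,v^*)=(0,0).\]

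Next we insert the coderivative formula \eqref{EqCoderivG}, which is taken as given (sum rule with a smooth part from \cite[Exercise 10.43]{RoWe98}). The condition $0\in D^*G(\ldots)(u,v^*)$ means there is $v$ with
\[\nabla^2_{xx}\lag(\xb,\yba)u-\nabla F(\xb)^Tv^*=0,\quad \nabla F(\xb)u+v=0,\quad -v^*\in D^*(\partial g)(\yb,\yba)(-v).\]
Eliminating $v=-\nabla F(\xb)u$ gives $-v^*\in D^*(\partial g)(\yb,\yba)(\nabla F(\xb)u)$. We then substitute $w^*:=-v^*$, which turns the system into
\[\nabla^2_{xx}\lag(\xb,\yba)u+\nabla F(\xb)^Tw^*=0,\qquad w^*\in D^*(\partial g)(\yb,\yba)(\nabla F(\xb)u).\]
The implication becomes: this system forces $(u,-w^*)=(0,0)$, which is equivalent to $(u,w^*)=(0,0)$. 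This is exactly \eqref{EqMord}.

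The only real care needed is sign bookkeeping, and we check it at two points. The first is the coderivative of the inverse, $D^*(\partial g)^{-1}(\yba,\yb)(v^*)=\{v\mv -v^*\in D^*(\partial g)(\yb,\yba)(-v)\}$. It follows from $(x^*,-y^*)\in N_{\gph F^{-1}}$ iff $(-y^*,x^*)\in N_{\gph F}$ together with positive homogeneity of the normal cone. The second is that the $-F(x)$ term in $G$ yields $+\nabla F(\xb)u$ in the second component once the direction $(u,v^*)$ is paired correctly. Neither step is a genuine obstacle. The equivalence then follows directly from Theorem \ref{ThMordRoCrit}(i).
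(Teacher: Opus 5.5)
Your proposal is correct and follows the paper's own proof. Both arguments use local closedness of $\gph G$, apply the Mordukhovich criterion to $G=\SKKT^{-1}$, insert \eqref{EqCoderivG}, eliminate $v=-\nabla F(\xb)u$, and replace $v^*$ by $-v^*$ to obtain \eqref{EqMord}; you additionally justify the local closedness that the paper only asserts.
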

\begin{proof}
Since $\gph G$ is locally closed around $\big((\xb,\yba),(\aba,\bb)\big)$, $\SKKT$ has the Aubin property around $\big((\aba,\bb),(\xb,\yba))$ if and only if $G$ is metrically regular around $\big((\xb,\yba),(\aba,\bb)\big)$, which in turn is equivalent to the Mordukhovich criterion
$(0,0)\in D^*G\big((\xb,\yba),(\aba,\bb)\big)(u,v^*)\ \Rightarrow\ (u,v^*)=(0,0)$. By using \eqref{EqCoderivG}, it is easy to see that the Mordukhovich criterion, after replacing $v^*$ by $-v^*$, is  equivalent to \eqref{EqMord}.
\end{proof}

The following statement is an immediate consequence of the preceding lemma.
\begin{proposition}\label{PropAubinSOQC}If $\SKKT$ has the Aubin property around $\big((\aba,\bb),(\xb,\yba)\big)$ then SOQC holds at $\xb$ for $\yba$.
\end{proposition}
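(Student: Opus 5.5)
The plan is to read off SOQC directly from the criterion in Lemma \ref{LemMord}, specialized to $u=0$. I will verify the coderivative form \eqref{EqSOQC_Coder}, which by Theorem \ref{ThSOQC} is equivalent to every other formulation of SOQC.

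Assume $\SKKT$ has the Aubin property around $\big((\aba,\bb),(\xb,\yba)\big)$. Then Lemma \ref{LemMord} says that condition \eqref{EqMord} holds. Let $v^*$ satisfy
\[\nabla F(\xb)^Tv^*=0,\qquad v^*\in D^*(\partial g)(\yb,\yba)(0).\]
Now take $u=0$ in \eqref{EqMord}. The first relation there becomes
\[\nabla^2_{xx}\lag(\xb,\yba)\,0+\nabla F(\xb)^Tv^*=\nabla F(\xb)^Tv^*=0,\]
which is true by assumption. The second relation becomes $v^*\in D^*(\partial g)(\yb,\yba)(\nabla F(\xb)0)=D^*(\partial g)(\yb,\yba)(0)$, which is also true by assumption.

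So the pair $(0,v^*)$ satisfies the premises of \eqref{EqMord}, and therefore $(0,v^*)=(0,0)$, that is, $v^*=0$. This is exactly \eqref{EqSOQC_Coder}, so SOQC holds at $\xb$ for $\yba$.

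There is no real obstacle here. The only points to check are that the sign conventions in \eqref{EqMord} match those in \eqref{EqSOQC_Coder}, which they do since both are written with $+\nabla F(\xb)^Tv^*$ and $D^*(\partial g)(\yb,\yba)$, and that Theorem \ref{ThSOQC} applies, which it does because Assumption \ref{Ass2} includes Assumption \ref{AssA1}.
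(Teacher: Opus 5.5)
Your proof is correct and follows the paper's argument exactly: setting $u=0$ in the criterion \eqref{EqMord} of Lemma~\ref{LemMord} gives precisely the coderivative form \eqref{EqSOQC_Coder} of SOQC. Invoking Theorem~\ref{ThSOQC} is not even needed, since by definition SOQC holds as soon as \eqref{EqSOQC_Coder} does.
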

\begin{proof}
  Follows from the observation that \eqref{EqMord} with $u=0$ amounts to condition \eqref{EqSOQC_Coder} defining SOQC.
\end{proof}
Now  consider the mapping $\Phi:\R^n\times\R^m\times\R^n\times\R^m\to \R^n\times\R^m\times\R^n\times\R^m$ given by
$\Phi(x,y^*,u^*,v)=(x, y^*+v+F(x),u^*,v)$. Then $\Phi$ is one-to-one, its Jacobian is everywhere invertible and
\[\Phi(\gph G)= \{(x, y+y^*, \nabla_x\lag(x,y^*),-F(x)+y)\mv x\in\R^n,\ (y,y^*)\in\gph \partial g\}=\gph \tilde G\]
with
\[\tilde G(x,z):=(\nabla f(x)+\nabla F(x)^T(z-\prox g(z)), -F(x)+\prox g(z)).\]
Clearly, $\tilde G$ is Lipschitz continuous near $(\xb,F(\xb)+\yba)$ and therefore $G$ is graphically Lipschitzian of dimension $n+m$ around $\big((\xb,\yba),(\aba,\bb)\big)$.

We are now in the position to state the first main result of this section.
\begin{theorem}
  Assume that the SC derivative $\Sp(\partial g)(F(\xb),\yba)=\{\bar L\}$ is a singleton. Then the following statements are equivalent.
  \begin{enumerate}
  \item[(i)]
  \begin{equation}\label{EqSCDRegular}
    \nabla^2_{xx}\lag(\xb,\yba)u+\nabla F(\xb)^Tv^*=0,\ (\nabla F(\xb)u,v^*)\in\bar L\ \Rightarrow\ (u,v^*)=(0,0).
  \end{equation}
    \item[(ii)] $\SKKT$ has the Aubin property around $\big((\aba,\bb),(\xb,\yba)\big)$.
    \item[(iii)] $\SKKT$ has a graphical localization $T$ around $\big((\aba,\bb),(\xb,\yba)\big)$ which is single-valued and Lipschitz continuous.
  \end{enumerate}
  Each of these statements entail that the single-valued graphical localization $T$ of $\SKKT$ is strictly differentiable at $(\aba,\bb)$
  and $\nabla T(\aba,\bb)(u^*,v)$ equals to the unique solution $(u,v^*)$ of the system
  \[\nabla^2_{xx}\lag(\xb,\yba)u+\nabla F(\xb)^Tv^*=u^*, (\nabla F(\xb)u+v,v^*)\in\bar L.\]
\end{theorem}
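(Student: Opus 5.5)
The plan is to reduce everything to the Mordukhovich criterion of Lemma \ref{LemMord} and to the strict proto-differentiability of $\partial g$. Since $\Sp(\partial g)(\yb,\yba)=\{\bar L\}$ is a singleton, \cite[Corollary 3.3]{Gfr25c} (already used in the proof of Proposition \ref{PropChainRuleSCD}) gives $\gph D^*(\partial g)(\yb,\yba)=\bar L$. By Proposition \ref{PropSCDproxreg}, $\bar L=\bar L^*$. Moreover, I expect the same source to give strict proto-differentiability of $\partial g$ at $(\yb,\yba)$, so that also $\gph D_*(\partial g)(\yb,\yba)=\bar L$ by \eqref{EqLemStrProto}.

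For (i)$\Leftrightarrow$(ii), I substitute $\gph D^*(\partial g)(\yb,\yba)=\bar L$ into \eqref{EqMord}. The condition $v^*\in D^*(\partial g)(\yb,\yba)(\nabla F(\xb)u)$ becomes $(\nabla F(\xb)u,v^*)\in\bar L$, so \eqref{EqMord} is literally \eqref{EqSCDRegular}. Lemma \ref{LemMord} then gives the equivalence.

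The implication (iii)$\Rightarrow$(ii) is trivial. For (ii)$\Rightarrow$(iii), I use that $G$ is graphically Lipschitzian of dimension $n+m$, as shown before the theorem, and apply \cite[Theorem 3.6]{Gfr25c}. This requires $G$ to be strictly proto-differentiable at $((\xb,\yba),(\aba,\bb))$, and this is the main obstacle. I would verify it by transporting through the graph representation. We have $\gph G=\{(x,y^*,\nabla_x\lag(x,y^*),-F(x)+y)\mv x\in\R^n,\ (y,y^*)\in\gph\partial g\}$, which is the image of $\R^n\times\gph\partial g$ under the $C^1$ map $(x,y,y^*)\mapsto(x,y^*,\nabla_x\lag(x,y^*),y-F(x))$. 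This map is a diffeomorphism onto its image, because one can invert via $y=b+F(x)$. Strict proto-differentiability of a set, meaning that the strict limit of difference quotients exists, is preserved under $C^1$ diffeomorphisms and products with $\R^n$. Hence $\gph G$ is strictly smooth at the reference point, and its strict tangent space is
\[\{(u,v^*,\nabla^2_{xx}\lag(\xb,\yba)u+\nabla F(\xb)^Tv^*,\,v-\nabla F(\xb)u)\mv (v,v^*)\in\bar L\}.\]
Then metric regularity implies strong metric regularity, which is (iii). Alternatively, Theorem \ref{ThMordRoCrit}(ii) together with $\gph D_*G$ computed as above gives the same kernel condition, namely \eqref{EqSCDRegular}, directly.

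For strict differentiability, I would use the following fact. Once $T$ is a single-valued Lipschitz localization and $\gph\SKKT$ is strictly proto-differentiable with limit subspace $\gph D_*\SKKT=(\gph D_*G)^{-1}$, this subspace is the graph of a linear map. The Lipschitz property ensures it is a graph, and the dimension count $n+m$ comes from the graphically Lipschitzian structure. Strict proto-differentiability of a Lipschitz single-valued map to a linear graph is exactly strict differentiability (cf.\ \cite[Section 9]{RoWe98}). Reading off the tangent space above, the derivative at the direction $(u^*,\tilde b)$ is the $(u,v^*)$ with $\nabla^2_{xx}\lag u+\nabla F(\xb)^Tv^*=u^*$ and $v-\nabla F(\xb)u=\tilde b$, that is, $(\nabla F(\xb)u+\tilde b,v^*)\in\bar L$. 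This is the stated system, and uniqueness of its solution is (i).
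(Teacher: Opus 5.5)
Your proposal is correct, and it reaches the equivalences by a more elementary route than the paper. Your expectation about strict proto-differentiability of $\partial g$ is right; the paper gets it from \cite[Corollary 3.3]{Gfr25c}.

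\textbf{How the paper argues.} It invokes \cite[Corollary 3.3]{Gfr25c} twice: first for $\partial g$, then for $G$ after noting that $\gph D^*G$ is a subspace. It then lets \cite[Theorem 3.6]{Gfr25c} deliver in one stroke the equivalence of metric regularity, strong metric regularity and the representation $\gph D^*G=\rge(C^T,I_{n+m})$, together with strict differentiability of $T$. Condition (i) enters only through \cite[Proposition 4.2]{GfrOut22}, which converts that representation into a kernel condition.

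\textbf{How your route differs.} You obtain (i)$\Leftrightarrow$(ii) literally from Lemma~\ref{LemMord}. You prove strict proto-differentiability of $G$ by transporting strict smoothness of $\R^n\times\gph\partial g$, rather than by a second application of \cite[Corollary 3.3]{Gfr25c}. Your alternative via Theorem~\ref{ThMordRoCrit}(ii) shows that the Mordukhovich and Rockafellar criteria both reduce to condition (i), as in the proof of Theorem~\ref{ThSOQC}. This bypasses \cite[Theorem 3.6]{Gfr25c} altogether.

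Along that route even \cite[Corollary 3.3]{Gfr25c} is dispensable. By \eqref{EqSCDconvex} and injectivity of $B\mapsto\rge(B,I-B)$, the set $\onabla\prox g(\yb+\yba)$ is a singleton. Then \eqref{EqInclCoderiv} and \eqref{EqInclStrictDeriv} give $\gph D^*(\partial g)(\yb,\yba)=\gph D_*(\partial g)(\yb,\yba)=\bar L$ directly. For strict differentiability it suffices that the Lipschitz localization has a single-valued linear strict derivative.

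\textbf{What each buys.} The paper's route is shorter given the cited machinery and makes the role of strict proto-differentiability of $G$ explicit. Yours relies only on the two classical criteria and the coderivative and strict-derivative inclusions already recorded in the paper.

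\textbf{Two details to tidy up.} First, the map $(x,y,y^*)\mapsto(x,y^*,\nabla_x\lag(x,y^*),y-F(x))$ goes from $\R^{n+2m}$ into $\R^{2n+2m}$, so it is not an ambient diffeomorphism. You can pad it to $(x,y,y^*,w)\mapsto(x,y^*,\nabla_x\lag(x,y^*)+w,y-F(x))$ acting on $\R^n\times\gph\partial g\times\{0\}$. Alternatively, observe that the invariance argument only needs a $C^1$ embedding with locally Lipschitz inverse. Second, Theorem~\ref{ThMordRoCrit}(ii) gives Lipschitz continuity only relative to the domain. In the alternative route you therefore need (ii) to guarantee that the domain of the localization is a neighborhood of $(\aba,\bb)$.
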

\begin{proof}
  Since $\Sp(\partial g)(\yb,\yba)=\{\bar L\}(=\Sp^*(\partial g)(\yb,\yba))$ and $\partial g$ is graphically Lipschitzian, we conclude from \cite[Corollary 3.3]{Gfr25c} that $\partial g$ is strictly proto-differentiable at $(\yb,\yba)$ and $\gph D^*(\partial g)(\yb,\yba)=\bar L.$
  Therefore
  \begin{equation}\label{EqAuxCoderivG}\gph D^*G\big((\xb,\yba),(\aba,\bb)\big)=\Big\{\big((u,v^*), (\nabla^2_{xx}\lag(\xb,\yba)u-\nabla F(\xb)^Tv^*, \nabla F(\xb)u+v)\big)\mv -(v,v^*)\in \bar L\Big\}\end{equation}
  is a subspace and, since $G$ is graphically Lipschitzian of dimension $n+m$, we can use \cite[Corollary 3.3]{Gfr25c} once more to obtain the strict proto-differentiability of $G$ at $\big((\xb,\yba),(\aba,\bb)\big)$. Thus  we can infer from \cite[Theorem 3.6]{Gfr25c} the equivalence of the following three statements:
  \begin{enumerate}
    \item[(a)]There is a $(n+m)\times(n+m)$ matrix $C$ such that
    \[\gph D^*G\big((\xb,\yba),(\aba,\bb)\big)=\rge(C^T,I_{n+m}).\]
    \item[(b)]$G$ is metrically regular around $\big((\xb,\yba),(\aba,\bb)\big)$.
    \item[(c)]$G$ is strongly metrically regular around $\big((\xb,\yba),(\aba,\bb)\big)$.
  \end{enumerate}
  Clearly, (ii)$\Leftrightarrow$(b) and (iii)$\Leftrightarrow$(c). By \cite[Proposition 4.2]{GfrOut22}, statement (a) is equivalent to the condition
  \[\nabla^2_{xx}\lag(\xb,\yba)u-\nabla F(\xb)^Tv^*=0,\ \nabla F(\xb)u+v=0,\ -(v,v^*)\in \bar L\ \Rightarrow\ (u,v^*)=(0,0),\]
  which is obviously equivalent to \eqref{EqSCDRegular}. Thus, (i)$\Leftrightarrow$(a) also holds and the equivalence of the three statements (i),(ii),(iii) is a consequence of the equivalence of (a),(b) and (c).\\
  The assertion about strict differentiability of the single-valued graphical localization $T$ of $\SKKT$ at $(\aba,\bb)$ follows from \cite[Theorem 3.6]{Gfr25c} and the formula for $\nabla T(\aba,\bb)(u^*v)=D\SKKT\big((\aba,\bb),(\xb,\yba)\big)(u^*,v)$ is a consequence of $D\SKKT\big((\aba,\bb),(\xb,\yba)\big)=DG\big((\xb,\yba),(\aba,\bb)\big)^{-1}$ together with
  \begin{align*}\gph DG\big((\xb,\yba),(\aba,\bb)\big)&=\gph D_*G\big((\xb,\yba),(\aba,\bb)\big)=\Big(\gph D^*G\big((\xb,\yba),(\aba,\bb)\big)\Big)^*\\
  &=\Big\{\big((u,v^*), (\nabla^2_{xx}\lag(\xb,\yba)u+\nabla F(\xb)^Tv^*, -\nabla F(\xb)u+w)\big)\mv (w,v^*)\in \bar L\Big\},\end{align*}
  where the first equation holds true by \cite[Equation (2.4)]{Gfr25c}, the second one follows from \eqref{EqLemStrProto} and the last one can be obtained by some elementary calculations from \eqref{EqAuxCoderivG}.
\end{proof}
\begin{remark}
\begin{enumerate}
\item The proof of the proposition above relies on the fact, that $\Sp(\partial g)(\yb,\yba)$ is a singleton if and only if $\partial g$ is strictly proto-differentiable at $\yb$ for $\yba$, cf. \cite[Corollary 3.3]{Gfr25c}.
\item For the class of {\em reliably $C^2$-decomposable functions} $g$ it was shown by Hang and Sarabi \cite[Theorem 5.8]{HaSa25} that $\partial g$ is strictly proto-differentiable at $\yb$ for $\yba$ if and only if $\yba\in\ri\partial g(\yb)$.
\item If $\gph \partial g$ is SCD semismooth* in the sense of \cite[Definition 5.1]{GfrOut22}, which is, e.g., the case when $\partial g$ is a closed subanalytic set, then for almost all $(y,y^*)\in \gph \partial g$ (with respect to the $m$-dimensional Hausdorff measure) the subgradient mapping $\partial g$ is strictly proto-differentiable at $y$ for $y^*$, cf. \cite[Corollary 4.3]{GfrOut25_prepr}.
\end{enumerate}
\end{remark}

Let us proceed with the general case . Recall the following definitions due to Rockafellar \cite{Ro19,Ro23a}.
\begin{definition}
\begin{enumerate}
  \item The \lsc function $\psi:\R^s\to\oR$  is called {\em variationally convex} at  $\zb\in \dom  \psi$ for $\zba\in\partial  \psi(\zb)$ if there is  some open convex neighborhood $\U\times \V^*$ of $(\zb,\zba)$, an \lsc function $\widehat \psi:\R^s\to\oR$ and a real  $\epsilon>0$ such that  $\widehat \psi$ is convex on $\U$, $\widehat \psi \leq  \psi$ on $\U$  and
\[\gph\partial  \psi\cap (\U_\epsilon\times \V^*) = \gph \partial \widehat \psi\cap (\U\times \V^*) \quad \mbox{and $\psi(z)=\widehat \psi(z)$ at the common elements $(z,z^*)$},\]
where $\U_\epsilon:=\{z\in \U\mv \psi(z)<\psi(\zb)+\epsilon\}$.\\
If this holds with  $\widehat \psi$ being strongly convex on $\U$, we will call $\psi$ to be {\em variationally strongly convex} at $\zb$ for $\zba$.
\item The {\em variational sufficient condition} holds for the problem \eqref{EqOptProbl} at $\xb$ for $\yba$ if there is some $\sigma\geq 0$ such that the function $\psi_\sigma:\R^n\times\R^m \to\oR$ given by
    \[\psi_\sigma(x,b):=f(x)+g(F(x)+b)+\frac \sigma2\norm{b}^2\]
    is variationally convex at $(\xb,\bb)$ for $(\aba,\yba)$. The {\em strong variational sufficient
condition} holds if $\psi_\sigma$ is variationally strongly convex.
\end{enumerate}
\end{definition}

A characterization of strong variational sufficiency in terms of quadratic bundles was given by Rockafellar in {\cite[Theorem 5]{Ro23a}}.
In order to facilitate the presentation we introduce the following notation.
\begin{definition}
  Given an \lsc function $\psi:\R^s\to\oR$ and a pair $(\zb,\zba)\in\gph \partial \psi$ such that $\psi$ is prox-regular and subdifferentially continuous at $\zb$ for $\zba$, we define the {\em q-subderivative} of $\psi$ at $\zb$ for $\zba$ as the function ${\rm d}^2_q \psi(\zb,\zba):\R^s\to\oR$ given by
    \begin{equation}\label{Eq_q_subderiv}
    {\rm d}^2_q \psi(\zb,\zba)(w)= 2\min\{q(w)\mv q\in{\rm quad\,}\psi(\zb,\zba)\},\ w\in\R^s.\end{equation}
\end{definition}
In view of \eqref{EqQuad_PW_Equiv} we also have the representation
\begin{equation}\label{Eq_q_subderiv_SCD}
{\rm d}^2_q \psi(\zb,\zba)(w)=\min\{\skalp{w,Ww}\mv (P,W)\in\M_{P,W}\partial \psi(\zb,\zba), w\in\rge P\}.
\end{equation}
Since the quadratic bundle ${\rm quad\,}\psi(\zb,\zba)$ is nonempty and compact with respect to epi-convergence, cf. \cite{Ro25_prepr},  and $\Sp(\partial \psi)(\zb,\zba)$ is compact in $\Z_s$, the minima in \eqref{Eq_q_subderiv} and \eqref{Eq_q_subderiv_SCD} are actually attained. Further, ${\rm d}^2_q \psi(\zb,\zba)$ is proper \lsc and
\begin{equation}\label{Eq_q_subderiv_pos}
\Big({\rm d}^2_q \psi(\zb,\zba)(w)>0\ \forall w\not=0\Big)\ \Longleftrightarrow\ \Big(\exists \mu>0:\ {\rm d}^2_q \psi(\zb,\zba)(w)\geq\mu\norm{z}^2\ \forall w\Big).\end{equation}
\begin{theorem}[{\cite[Theorem 5]{Ro23a}}]The strong variational  sufficient condition for the problem \eqref{EqOptProbl} holds at $\xb$ for $\yba$ if and only if
\begin{equation}\label{EqCharVarSuff}
  \skalp{u,\nabla^2_{xx}\lag(\xb,\yba)u}+{\rm d}^2_q g(F(\xb),\yba)(\nabla F(\xb)u)>0\ \forall u\not=0.
\end{equation}
\end{theorem}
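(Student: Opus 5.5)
This is Rockafellar's \cite[Theorem 5]{Ro23a}, so we only need to translate its quadratic-bundle condition into the notation of \eqref{EqCharVarSuff}. Set $\psi_\sigma(x,b)=f(x)+g(F(x)+b)+\frac\sigma2\norm{b}^2$ and write $\psi_\sigma=\phi+g\circ\Gamma$ with $\phi(x,b)=f(x)+\frac\sigma2\norm{b}^2$ and $\Gamma(x,b)=F(x)+b$. The point of this splitting is that $\nabla\Gamma(\xb,\bb)=(\nabla F(\xb),I)$ has full row rank $m$.

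\textbf{Step 1: qualification and subdifferential.} Full row rank of $\nabla\Gamma$ gives the basic constraint qualification automatically, so $g\circ\Gamma$ is amenable. Hence $\psi_\sigma$ is prox-regular and subdifferentially continuous at $(\xb,\bb)$. Its subgradient there is $(\nabla f(\xb)+\nabla F(\xb)^T\yba,\ \yba)=(\aba,\yba)$, which is exactly the pair appearing in the definition of variational sufficiency.

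\textbf{Step 2: quadratic bundle of $\psi_\sigma$.} I apply the full-rank case of Proposition \ref{PropChainRuleSCD} to $\Gamma$; the smooth term $\phi$ is handled by the same computation, since it only adds its Hessian. This gives
\[
{\rm quad\,}\psi_\sigma\big((\xb,\bb),(\aba,\yba)\big)=\Big\{q_\sigma(u,v)=\tfrac12\skalp{u,\nabla^2_{xx}\lag(\xb,\yba)u}+\tfrac\sigma2\norm{v}^2+q(\nabla F(\xb)u+v)\mv q\in{\rm quad\,}g(\yb,\yba)\Big\}.
\]
The translation uses Proposition \ref{PropSCD_quad} and the observation that the subspace $\{(w,\nabla^2(\cdot)w+\nabla\Gamma^Tv^*)\mid(\nabla\Gamma w,v^*)\in\gph\partial q\}$ is the graph of $\partial$ of the composed quadratic form.

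\textbf{Step 3: Rockafellar's criterion and elimination of $\sigma$.} By Rockafellar's criterion, strong variational sufficiency holds iff for some $\sigma\ge0$ every $q_\sigma$ in the bundle is positive definite on $\R^{n+m}\setminus\{0\}$. Taking minima over the compact bundle, this reads
\[
\skalp{u,\nabla^2_{xx}\lag u}+\sigma\norm{v}^2+{\rm d}^2_q g(\yb,\yba)(\nabla F(\xb)u+v)>0\quad\forall (u,v)\neq0 .
\]
\emph{Necessity.} Put $v=0$. For $u\neq0$ this yields \eqref{EqCharVarSuff}, and the condition is independent of $\sigma$.

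\emph{Sufficiency.} This direction is a Finsler-type argument and is the main obstacle, because ${\rm d}^2_q g$ is a minimum of generalized quadratic forms rather than a single form. Suppose that for every $\sigma_k\to\infty$ there is $(u_k,v_k)$ of norm one with the expression $\le0$. Each ${\rm d}^2_q g\geq\skalp{w,\,PWP\,w}$ on $\rge P$ for the attaining $(P,W)$, and the bundle is compact, so the values are bounded below on bounded sets. Hence $\sigma_k\norm{v_k}^2$ stays bounded, which forces $v_k\to0$ and $u_k\to u$ with $\norm{u}=1$. Lower semicontinuity of ${\rm d}^2_q g$ then gives $\skalp{u,\nabla^2_{xx}\lag u}+{\rm d}^2_q g(\nabla F(\xb)u)\le0$, contradicting \eqref{EqCharVarSuff}.

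The uniform lower bound needs care. A quadratic form in the bundle may be indefinite, and its infinite part off $\dom q$ is what allows $\sigma$ to be absorbed. I would get the bound from $\norm{W}$ being uniformly bounded over the compact set $\M_{P,W}\partial g$, so that ${\rm d}^2_q g(w)\ge-c\norm{w}^2$.
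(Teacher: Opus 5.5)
\textbf{Overall assessment.} Your proposal is essentially correct and takes a genuinely different route from the paper, but one justification in the sufficiency step is false and needs replacing.

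\textbf{Comparison with the paper.} The paper does not prove this statement. It quotes it from \cite[Theorem 5]{Ro23a}. The only in-paper ingredient is the notation ${\rm d}^2_q g=2\min\{q\mid q\in{\rm quad\,}g\}$, with the minimum attained by compactness of the bundle. So \eqref{EqCharVarSuff} is just Rockafellar's requirement $\frac12\skalp{u,\nabla^2_{xx}\lag(\xb,\yba)u}+q(\nabla F(\xb)u)>0$ for all $q\in{\rm quad\,}g(\yb,\yba)$ and $u\neq0$.

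You announce a mere translation, but you actually give an independent derivation, and it is sound. You apply a general bundle criterion for variational strong convexity to $\psi_\sigma$. You compute ${\rm quad\,}\psi_\sigma$ through the full-rank case of Proposition \ref{PropChainRuleSCD} applied to $\Gamma(x,b)=F(x)+b$. Then you remove $\sigma$ by a Finsler-type compactness argument.

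What this buys: the theorem becomes a consequence of the paper's own SCD calculus. It also becomes transparent why no qualification condition appears. The perturbation variable $b$ makes $\nabla\Gamma$ surjective, so the chain rule holds unconditionally.

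What it costs: everything rests on the criterion ``a prox-regular, subdifferentially continuous $\psi$ is variationally strongly convex at $\bar z$ for $\bar z^*$ iff every $q\in{\rm quad\,}\psi(\bar z,\bar z^*)$ is positive definite.'' That is not the cited Theorem 5; it is a substantial separate result and must be cited on its own. One way is via tilt stability, using \cite{Gfr25b} together with \cite{KhMoPhVi26}, as the paper does before \eqref{EqVarStrConv2}. The sum rule for the smooth summand $\phi$ is not in the paper either. It is elementary, since $\gph\partial(\phi+h)$ is the image of $\gph\partial h$ under a $C^1$ diffeomorphism.

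\textbf{The incorrect justification.} You propose to get the lower bound from $\norm{W}$ being uniformly bounded over $\M_{P,W}\partial g(\yb,\yba)$. That set is \emph{not} bounded. Compactness holds for the subspaces in $\Z_m$, but the map $L\mapsto(P,W)$ is discontinuous and $W$ can blow up. For example, take $g=\frac12\norm{\cdot}_p^2$ on $\R^2$ with $1<p<2$. Then $\nabla^2 g$ is $0$-homogeneous and unbounded near the axes, and every $(I,\nabla^2 g(y))$ with $y$ off the axes belongs to $\M_{P,W}\partial g(0,0)$.

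Your remark that bundle elements may be indefinite is also false here, since $g$ is convex. Convexity is exactly what gives the bound for free. Every $L\in\Sp(\partial g)(\yb,\yba)$ has the form $\rge(B,I-B)$ with $B$ symmetric and $0\preceq B\preceq I$. For $w=Bp\in\rge P$, \eqref{EqPW1} gives $\skalp{w,Ww}=\skalp{Bp,(I-B)p}=\skalp{p,B(I-B)p}\geq 0$. Hence every $q\in{\rm quad\,}g$ is nonnegative and ${\rm d}^2_q g\geq 0$.

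Alternatively, your other remark is correct: epi-compactness of the bundle yields a uniform lower bound on bounded sets, even without convexity. But that is not the argument you committed to.

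With ${\rm d}^2_q g\geq0$ you get $\sigma_k\norm{v_k}^2\leq\norm{\nabla^2_{xx}\lag(\xb,\yba)}$, hence $v_k\to 0$. The rest of your argument then goes through: normalization by $2$-homogeneity, dropping $\sigma_k\norm{v_k}^2\geq 0$, and lower semicontinuity of ${\rm d}^2_q g$.
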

If the variational sufficiency condition holds at $\xb$ for $\yba$, it is easy to see that for any $\epsilon>0$ the mapping $(x,b)\to\psi_{\sigma+\epsilon}(x,b)+\frac\epsilon2\norm{x}^2$ is variationally strongly convex and therefore the strong variational sufficiency condition holds for the problem \eqref{EqOptProbl} with $f$ replaced by $f+\frac\epsilon2\norm{\cdot}^2$. Hence
\[    \skalp{u,\nabla^2_{xx}\lag(\xb,\yba)u}+\epsilon\norm{u}^2+{\rm d}^2_q g(F(\xb),\yba)(\nabla F(\xb)u)>0\ \forall u\not=0\]
and, by  passing $\epsilon\downarrow0$, we obtain that the condition
\begin{equation}
  \label{EqNecVarSuff}  \skalp{u,\nabla^2_{xx}\lag(\xb,\yba)u}+{\rm d}^2_q g(F(\xb),\yba)(\nabla F(\xb)u)\geq 0\ \forall u.
\end{equation}
is necessary for variational sufficiency.

We are now in the position to formulate the second main result of this section.
\begin{theorem}\label{ThAubin2}
The following statements are equivalent.
  \begin{enumerate}
    \item[(i)] $\SKKT$ has the Aubin property around $\big((\aba,\bb),(\xb,\yba)\big)$ and the necessary condition for variational sufficiency \eqref{EqNecVarSuff} holds.
    \item[(ii)] Both SOQC and strong variational sufficiency  hold at $\xb$ for $\yba$.
    \item[(iii)] For every pair $(P,W)\in \M_{P,W}\partial g(F(\xb),\yba)$ there holds
    \begin{align}&\ker \nabla F(\xb)^T\cap(\rge P)^\perp=\{0\}\\
    &\bskalp{u,\big(\nabla^2_{xx}\lag(\xb,\yba)+\nabla F(\xb)^TW\nabla F(\xb)\big)u}>0\ \forall u\not=0: \nabla F(\xb)u\in\rge P.
    \end{align}
    \item[(iv)] For every $q\in {\rm quad\,}g(F(\xb),\yba)$ there holds
    \begin{align}
    &\ker \nabla F(\xb)^T\cap(\dom q)^\perp=\{0\} ,\\
      &\frac 12\skalp{u,\nabla^2_{xx}\lag(\xb,\yba)u}+q(\nabla F(\xb)u)>0\ \forall u\not=0.
    \end{align}
    \item[(v)]The variational sufficient condition holds at $\xb$ for $\yba$ and the primal--dual pair $(\xb,\yba)$ is fully stable in problem \eqref{EqOptProbl}.
    \item[(vi)] $\SKKT$ has a graphical localization around $\big((\aba,\bb),(\xb,\yba)\big)$ which is single-valued and Lipschitz continuous and the necessary condition for variational sufficiency \eqref{EqNecVarSuff} holds.
  \end{enumerate}
\end{theorem}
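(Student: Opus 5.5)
The plan is to run the equivalences through the chain (ii)$\Leftrightarrow$(iii)$\Leftrightarrow$(iv), then attach (i), (vi) and (v) to it. For (iii)$\Leftrightarrow$(iv), Remark \ref{RemEquiv_SOQC_SCD} translates the first condition (SOQC) between $(P,W)$-bases and quadratic bundles. The second conditions correspond via \eqref{EqQuad_PW_Equiv}: $q_{P,W}(\nabla F(\xb)u)$ is finite exactly when $\nabla F(\xb)u\in\rge P$, and there it equals $\frac12\skalp{\nabla F(\xb)u,W\nabla F(\xb)u}$. For (ii)$\Leftrightarrow$(iv), SOQC is \eqref{EqSOQC_quad}, and strong variational sufficiency is \eqref{EqCharVarSuff} by Rockafellar's theorem. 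Since the minimum in \eqref{Eq_q_subderiv} is attained, \eqref{EqCharVarSuff} says precisely that $\frac12\skalp{u,\nabla^2_{xx}\lag u}+q(\nabla F(\xb)u)>0$ for all $u\neq0$ and all $q\in{\rm quad\,}g$.

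Next, (ii)$\Rightarrow$(i). Strong variational sufficiency trivially gives \eqref{EqNecVarSuff}. For the Aubin property I would verify \eqref{EqMord} using the coderivative inclusion \eqref{EqInclCoderiv}. Suppose $\nabla^2_{xx}\lag u+\nabla F^Tv^*=0$ and $(\nabla F u,v^*)=(Bp,(I-B)p)$ with $B=\sum\alpha_iB_i\in\co\onabla\prox g$. Set $(w_i,w_i^*)=(B_ip,(I-B_i)p)\in L_i\in\Sp(\partial g)$. Each $L_i$ is self-adjoint, so $L_i=\rge(P_i,W_i)$ with $\skalp{w_i,w_i^*}=\skalp{w_i,W_iw_i}$ and $w_i\in\rge P_i$. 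Then
\[0=\skalp{u,\nabla^2_{xx}\lag u}+\skalp{\nabla Fu,v^*}=\sum\alpha_i\big(\skalp{u,\nabla^2_{xx}\lag u}+\skalp{B_ip,(I-B_i)p}\big)+\big(\skalp{Bp,(I-B)p}-\sum\alpha_i\skalp{B_ip,(I-B_i)p}\big).\]
The last bracket equals $\sum\alpha_i\skalp{B_ip,B_ip}-\skalp{Bp,Bp}\geq0$ by convexity of $\norm{\cdot}^2$. The delicate point is that $w_i=B_ip$ need not equal $\nabla F u$. To handle this, I would write $B_ip=Bp+(B_i-B)p$ and use the uniform positivity \eqref{Eq_q_subderiv_pos}-type margin, combined with SOQC to control the component orthogonal to $\rge\nabla F$. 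If that becomes messy, I would instead use Benko--Rockafellar: strong variational sufficiency makes the KKT map locally the inverse of a strongly monotone-type mapping, and with SOQC (Theorem \ref{ThLipMult}, unique Lipschitz multipliers) this yields a Lipschitz single-valued localization, which gives (vi) and hence (i) directly. I expect to adopt this second route, citing \cite{BeRo24}, where full stability is characterized as variational sufficiency plus the Aubin property of $\SKKT$, and strong variational sufficiency plus SOQC-type uniqueness as full stability. This also gives (ii)$\Leftrightarrow$(v).

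For (i)$\Rightarrow$(ii), Proposition \ref{PropAubinSOQC} gives SOQC, and it remains to upgrade \eqref{EqNecVarSuff} to strict positivity, which is the main obstacle. Suppose $u\neq0$ attains $0$, with the minimizing $(P,W)$, so $L=\rge(P,W)\in\Sp(\partial g)$. Then $u$ minimizes the quadratic $\frac12\skalp{u,(\nabla^2_{xx}\lag+\nabla F^TW\nabla F)u}$ over the subspace $\{\nabla Fu\in\rge P\}$, where this quadratic is nonnegative. From \eqref{EqNecVarSuff} applied to this particular $q\geq{\rm d}^2_q$, the stationarity conditions hold: $(\nabla^2_{xx}\lag+\nabla F^TW\nabla F)u\in\nabla F^T(\rge P)^\perp$. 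Writing this via $p$ and $v^*=Wp+\xi$ with $\xi\in(\rge P)^\perp$, we get $(\nabla Fu,v^*)\in L$ and $\nabla^2_{xx}\lag u+\nabla F^Tv^*=0$, because $(0,\xi)\in L$ by \eqref{EqPW}. Since $L\subset\gph D^*(\partial g)$ by \eqref{EqIncl1} and Proposition \ref{PropSCDproxreg}, this contradicts \eqref{EqMord}. Finally, (vi)$\Rightarrow$(i) is immediate, and (ii)$\Rightarrow$(vi) follows from the chain above.
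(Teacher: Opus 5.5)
Your proof of (i)$\Rightarrow$(ii) and the bookkeeping for (ii)$\Leftrightarrow$(iii)$\Leftrightarrow$(iv) are correct and match the paper's arguments. Your $v^*=W\nabla F(\xb)u+\xi$ with $(0,\xi)\in L$ is the paper's $W\bar p$, and you rightly use $L=L^*$ to get $L\subset\gph D^*(\partial g)(\yb,\yba)$. Citing \cite[Theorem 4.2]{BeRo24} for (ii)$\Rightarrow$(v) is also the paper's route. You should record two facts there: \cite[(4.17)]{BeRo24} is the strict-derivative form \eqref{EqSOQC_StrictDer} of SOQC, and SOQC yields \eqref{EqBQC} by Lemma \ref{LemBCQ}, which that theorem requires. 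You are right to abandon the direct verification of \eqref{EqMord}. Coderivative elements produced by genuine convex combinations in \eqref{EqInclCoderiv} need not lie in any single $L\in\Sp(\partial g)(\yb,\yba)$, so (iii) cannot be applied to them. The ``strongly monotone inverse'' heuristic is not a proof either. $\SKKT$ is only a partial inverse of $\partial\psi_\sigma$, and Theorem \ref{ThLipMult} gives uniqueness of multipliers, not existence of KKT pairs for each $(a^*,b)$.

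The genuine gap is that no valid implication leaves (v). You rely on a characterization ``full stability $=$ variational sufficiency plus the Aubin property of $\SKKT$''. As you state it, this is essentially part of the conclusion of the theorem you are proving, and in the paper it is not a citation but is assembled by an argument. Full stability is a statement about $\widehat S^\delta_{\rm Opt}$, which only sees KKT pairs whose primal part minimizes the perturbed objective over $\B(\xb,\delta)$. It says nothing about nearby KKT pairs that are merely stationary, so by itself it cannot control $\SKKT$.

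The missing ingredient is \cite[Theorem 3.4]{BeRo24}. Under variational sufficiency it gives neighborhoods $\tilde\U^*\times\tilde\V\times\X\times\Y^*$ of $\big((\aba,\bb),(\xb,\yba)\big)$ such that for every $\big((a^*,b),(x,y^*)\big)\in\gph\SKKT$ in them, $x$ minimizes $f-\skalp{a^*,\cdot}+g(F(\cdot)+b)$ over $\X$. By continuity of $\widehat S^\delta_{\rm Opt}$ one picks $\hat\delta\leq\delta$ with $\B(\xb,\hat\delta)\times\B(\yba,\hat\delta)\subset\X\times\Y^*$ such that $\widehat S^{\hat\delta}_{\rm Opt}=\widehat S^\delta_{\rm Opt}$ near $(\aba,\bb)$. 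Then $\gph\SKKT$ and $\gph\widehat S^\delta_{\rm Opt}$ coincide near the reference point, so $\widehat S^\delta_{\rm Opt}$ is itself the single-valued Lipschitz localization. This gives (v)$\Rightarrow$(vi), and (vi)$\Rightarrow$(i) is trivial.

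There is one more caveat. \cite{BeRo24} is formulated under \eqref{EqBQC}, which is not contained in (v). The paper handles this by checking that the proof of \cite[Theorem 3.4]{BeRo24} does not use it. With this step supplied, your scheme becomes the paper's cycle (i)$\Rightarrow$(ii)$\Rightarrow$(v)$\Rightarrow$(vi)$\Rightarrow$(i).
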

\begin{proof}
{\bf (i)$\Rightarrow$(ii):} If (i) holds, Proposition \ref{PropAubinSOQC} tells us that  SOQC is fulfilled at $\xb$ for $\yba$. We now will show by contraposition that the criterion \eqref{EqCharVarSuff} for strong variational sufficiency must hold  as well. Assume on the contrary that there is some $u\not=0$ with $\skalp{u,\nabla^2_{xx}\lag(\xb,\yba)u}+ d^2_q g(\yb,\yba)(\nabla F(\xb)u)\leq 0$. From \eqref{EqNecVarSuff} and \eqref{Eq_q_subderiv_SCD} we deduce that there exists $(P,W)\in\M_{P,W}\partial g(\yb,\yba)$ satisfying $\nabla F(\xb)u\in\rge P$ and
\[\bskalp{u,\big(\nabla^2_{xx}\lag(\xb,\yba)+\nabla F(\xb)^TW\nabla F(\xb)\big)u}=0.\]
Since the necessary condition for variational sufficiency \eqref{EqNecVarSuff} tells us that the symmetric matrix
\[\nabla^2_{xx}\lag(\xb,\yba)+\nabla F(\xb)^TW\nabla F(\xb)\]
is positive semidefinite on the subspace $U:=\{s\mv \nabla F(\xb)s\in\rge P\} =\ker (I-P)\nabla F(\xb)$, we conclude that
\[\big(\nabla^2_{xx}\lag(\xb,\yba)+\nabla F(\xb)^TW\nabla F(\xb)\big)u\in U^\perp=\rge \nabla F(\xb)^T(I-P)\]
and consequently there is some $p$ satisfying
\[\nabla F(\xb)^T(I-P)p= \nabla F(\xb)^TW(I-P)p = \big(\nabla^2_{xx}\lag(\xb,\yba)+\nabla F(\xb)^TW\nabla F(\xb)\big)u.\]
By setting $\bar p=\nabla F(\xb)u-(I-P)p$ and taking into account \eqref{EqIncl1}, it follows that
\[(P\bar p, W\bar p)=(\nabla F(\xb)u, W\bar p)\in\rge(P,W)\subset \gph D^*(\partial g)(\yb,\yba)\]
and
\[\nabla^2_{xx}\lag(\xb,\yba)u+\nabla F(\xb)^TW\bar p=\big(\nabla^2_{xx}\lag(\xb,\yba)+\nabla F(\xb)^TW\nabla F(\xb)\big)u-\nabla F(\xb)^TW(I-P)p=0.\]
Since $u\not=0$, the  criterion \eqref{EqMord} is violated for $u$ and $v^*=W\bar p$, contradicting the assumed Aubin property of $\SKKT$ around $\big((\aba,\bb),(\xb,\yba)\big)$. Thus, the criterion \eqref{EqCharVarSuff}, which is equivalent to strong variational sufficiency, holds true and the implication (i)$\Rightarrow$(ii) is established.

{\bf (ii)$\Leftrightarrow$(iii), (ii)$\Leftrightarrow$(iv):} These equivalences hold true by Remark \ref{RemEquiv_SOQC_SCD}, the characterization of strong variational sufficiency by  \eqref{EqCharVarSuff} and \eqref{Eq_q_subderiv}, \eqref{Eq_q_subderiv_SCD}.

 {\bf (ii)$\Rightarrow$(v):} The implication follows immediately from \cite[Theorem 4.2]{BeRo24} by observing that \cite[Equation (4.17)]{BeRo24} amounts to the condition \eqref{EqSOQC_StrictDer} defining SOQC. Note that we may apply \cite[Theorem 4.2]{BeRo24} because SOQC implies the basic constraint qualification \eqref{EqBQC}.

 {\bf (v)$\Rightarrow$(vi):} The proof of this implication relies on \cite[Theorem 3.4]{BeRo24}. By carefully checking the proof of \cite[Theorem 3.4]{BeRo24} we see that it does not use the basic constraint qualification \cite[Equation (1.3)]{BeRo24} and therefore we need not to assume that \eqref{EqBQC} holds.\\
 Consider $\delta>0$ and the neighborhoods $\U^*\times \V$ of $(\aba,\bb)$ according to Definition \ref{DefPrimalDualStability} such that $\widehat S_{\rm Opt}^\delta(a^*,b)$ is single-valued and Lipschitz on $\U^*\times \V$. By using \cite[Theorem 3.4]{BeRo24} there are neighborhoods $\tilde\U^*\times \tilde \V\times\X\times \Y^*$ of $\big((\aba,\bb),(\xb,\yba)\big)$ such that for every $\big((a^*,b),(x,y^*)\big)\in\gph \SKKT\cap (\tilde\U^*\times \tilde \V\times\X\times \Y^*)$ the point $x$ minimizes $\varphi_{a^*,b}:=f(\cdot)-\skalp{a^*,\cdot}+g(F(\cdot)+b)$ over $\X$. Next we choose $\hat\delta\leq\delta$ satisfying $\B(\xb,\hat\delta)\times \B(\yba,\hat\delta)\subset \X\times \Y^*$ together with neighborhoods $\hat\U^*\times\hat \V$ of $(\aba,\bb)$ fulfilling
 $\widehat S_{\rm Opt}^\delta(a^*,b)\in \B(\xb,\hat\delta)\times \B(\yba,\hat\delta)$ $\forall (a^*,b)\in\hat\U^*\times \hat \V$, implying that $\widehat S_{\rm Opt}^\delta$ coincides with $\widehat S_{\rm Opt}^{\hat\delta}$ on $\hat\U^*\times \hat \V$.  Since for every $(a^*,b)\in(\tilde \U^*\cap\hat\U^*)\times(\tilde \V\cap\hat \V)$ and every $(x,y^*)\in \SKKT(a^*,b)\cap(\B(\xb,\hat\delta)\times \B(\yba,\hat\delta))$ the point $x\in \B(\xb,\hat\delta)$ minimizes $\varphi_{a^*,b}$ over $\X\supset \B(\xb,\hat\delta)$, we obtain that
 \begin{align*}&\lefteqn{\gph \SKKT\cap\big((\tilde \U^*\cap\hat\U^*)\times(\tilde \V\cap\hat \V)\times\B(\xb,\hat\delta)\times \B(\yba,\hat\delta))}\\
 &= \gph \widehat S_{\rm Opt}^{\hat\delta} \cap  \big((\tilde \U^*\cap\hat\U^*)\times(\tilde \V\cap\hat \V)\times\B(\xb,\hat\delta)\times \B(\yba,\hat\delta)\big)\\
 &= \gph \widehat S_{\rm Opt}^\delta \cap  \big((\tilde \U^*\cap\hat\U^*)\times(\tilde \V\cap\hat \V)\times \R^n\times\R^m\big)
 \end{align*}
 and therefore $\widehat S_{\rm Opt}^\delta$ provides a single-valued Lipschitzian graphical localization for $\SKKT$.

{\bf (vi)$\Rightarrow$(i):} This holds trivially true.
\end{proof}
Let us illustrate this theorem by the following example.
\begin{example}\label{ExPoly}
  Let $g=\delta_C$ be the indicator function of a convex polyhedral set $C\subset\R^m$. By \cite[Example 3.29]{GfrOut22}, $\Sp(\partial\delta_C)(\yb,\yba)$ is the collection of all subspaces $(\F-\F)\times(\F-\F)^\perp$, where $\F$ is a face of the critical cone $\K_C(\yb,\yba)=\{v\in T_C(\yb)\mv \skalp{\yba,v}=0\}$. Clearly, for every such subspace we have $\{v^*\mv (0,v^*)\in (\F-\F)\times(\F-\F)^\perp\}=(\F-\F)^\perp$ and, together with the fact, that the lineality space $\lin T_C(\yb):=T_C(\yb)\cap(-T_C(\yb))$ is the smallest face of the critical cone, we obtain that SOQC amounts to $\ker\nabla F(\xb)\cap (\lin T_C(\yb))^\perp=\{0\}$, which is equivalent to the co-called {\em nondegeneracy condition}
  \[\rge \nabla F(\xb)+\lin T_C(\yb)=\R^m,\]
  cf. \cite[Equation (4.172)]{BonSh00}.
  Further, since for every $(v,v^*)\in (\F-\F)\times(\F-\F)^\perp$ we have $\skalp{v,v^*}=0$ and the largest face of the critical cone is the crical cone itself, we obtain that ${\rm d}^2_q\delta_C(\yb,\yba)=\delta_{\K_C(\yb,\yba)-\K_C(\yb,\yba)}$. Hence, the strong variational sufficient condition  amounts to the {\em strong second-order sufficient condition}
  \[\skalp{u,\nabla^2_{xx}\lag(\xb,\yba)u}>0\ \forall u\not=0:\nabla F(\xb)u\in\big(\K_C(\yb,\yba)-\K_C(\yb,\yba)\big),\]
  cf. \cite[Example 2, Theorem 4]{Ro23a}.

Recall that the graph of the limiting coderivative $\gph D^*(\partial\delta_C)(\yb,\yba)$ is the collection of all cones of the form
  $(\F_1 -\F_2)\times(\F_1-\F_2)^\circ$, where $\F_1,\F_2$ are two faces of the critical cone $\K_C(\yb,\yba)$ satisfying $\F_2\subset \F_1$, cf. \cite{DoRo96}.  We see that the the SC derivative has a simpler structure than the limiting coderivative. Further note that the structure of the   strict graphical derivative is even more involved, cf. \cite[Proposition 4H10]{DoRo14}.
\end{example}

The natural question arises whether strong variational sufficiency in problem \eqref{EqOptProbl} is somehow related with variational convexity of of the objective $\varphi$. Assume that the basic constraint qualification \eqref{EqBQC} holds ensuring that for every $(a^*,b)$ sufficiently close to $(\aba,\bb)$, every $\delta>0$ sufficiently small and every $x\in S_{\rm Opt}^\delta(a^*,b)$ the multiplier set $\{y^*\mv (x,y^*)\in\SKKT(a^*,b)\}$ is nonempty.
If the primal-dual  pair $(\xb,\yba)$ is fully stable in problem  \eqref{EqOptProbl}, it  easily follows from the definition that $\xb$ is a tilt-stable minimizer for the function $\varphi$ given by \eqref{EqOptProbl}.
\begin{definition}\label{DefTiltStab} Let $\varphi :\R^n\to\oR$ be a function and let $\xb\in\dom \varphi$. Then
 $\xb$ is a {\em tilt-stable local minimizer} of $\varphi$ if there is a number $\delta>0$ such that the mapping
\begin{equation}\label{EqM_gamma}
\tilde S_{\rm Opt}^\delta(a^\ast):=\argmin\big\{\varphi(x)-\skalp{a^\ast,x}\mv x\in\B(\xb,\delta)\big\},\quad a^*\in\R^n,
\end{equation}
is single-valued and Lipschitz continuous in some neighborhood of $\aba=0$ with $\tilde S_{\rm Opt}^\delta(0)=\{\xb\}$.
\end{definition}
Tilt-stability of $\xb$  is equivalent to variational strong convexity of $\varphi$ at $\xb$ for $0$, see, e.g., \cite{Gfr25b}, and, in the setting of \eqref{EqOptProbl}, it can be characterized  by the condition
\begin{equation*}
   \frac 12\skalp{u,\nabla^2 f(\xb)u}+\tilde q(u)\geq\mu\norm{u}^2\ \forall \tilde q\in {\rm quad\,}(g\circ F)(\xb,\xba)\ \forall u
\end{equation*}
with some $\mu>0$ by \cite[Proposition 3.10, Theorem 4.4]{KhMoPhVi26}. By using the q-subderivative of $g\circ F$, we conclude from \eqref{Eq_q_subderiv_pos} that strong variational convexity  and tilt-stability can be characterized by the condition
\begin{equation}\label{EqVarStrConv2}
  \skalp{u,\nabla^2 f(\xb)u}+{\rm d}^2_q(g\circ F)(\xb,\xba)(u)>0\  \forall u\not=0.
\end{equation}
Comparing \eqref{EqCharVarSuff} and  \eqref{EqVarStrConv2} suggests the following definition.
\begin{definition}\label{DefChRule}
 We say that the {\em  chain rule for the q-subderivative} of the composite function $g\circ F$ holds at $\xb$ for $\yba$ if  one has
   \begin{align}
   {\rm d}^2_q(g\circ F)(\xb,\nabla F(\xb)^T\yba)(u)= \skalp{u,\nabla^2\skalp{\yba,F}(\xb)u} +{\rm d}^2_q g(F(\xb),\yba)(\nabla F(\xb)u)\ \forall u\in\R^n,
   \end{align}
   provided that SOQC holds at $\xb$ for $\yba$.
 \end{definition}
 Our considerations above yield the following statement.
 \begin{theorem}In addition to our standing assumptions, suppose that  the basic constraint qualification \eqref{EqBQC} is fulfilled
   and the chain rule for the q-subderivative of $g\circ F$ holds at $\xb$ for $\yba$. Then the following statements  are equivalent.
 \begin{enumerate}
   \item[(i)] $\SKKT$ has the Aubin property around $\big((\aba,\bb),(\xb,\yba)\big)$ and the necessary condition for variational sufficiency \eqref{EqNecVarSuff} holds.
    \item[(ii)] Both SOQC and strong variational sufficiency  hold at $\xb$ for $\yba$.
    \item[(iii)] For every pair $(P,W)\in \M_{P,W}\partial g(F(\xb),\yba)$ there holds
    \begin{align}&\ker \nabla F(\xb)^T\cap(\rge P)^\perp=\{0\}\\
    &\bskalp{u,\big(\nabla^2_{xx}\lag(\xb,\yba)+\nabla F(\xb)^TW\nabla F(\xb)\big)u}>0\ \forall u\not=0: \nabla F(\xb)u\in\rge P.
    \end{align}
    \item[(iv)] For every $q\in {\rm quad\,}g(F(\xb),\yba)$ there holds
    \begin{align}
    &\ker \nabla F(\xb)^T\cap(\dom q)^\perp=\{0\} ,\\
      &\frac 12\skalp{u,\nabla^2_{xx}\lag(\xb,\yba)u}+q(\nabla F(\xb)u)>0\ \forall u\not=0.
    \end{align}
    \item[(v)] The variational sufficient condition holds at $\xb$ for $\yba$ and the primal--dual pair $(\xb,\yba)$ is fully stable in problem \eqref{EqOptProbl}.
    \item [(vi)] The primal--dual pair $(\xb,\yba)$ is fully stable in problem \eqref{EqOptProbl}.
    \item[(vii)] SOQC holds at $\xb$ for $\yba$ and $\xb$ is a tilt-stable local minimizer for \eqref{EqOptProbl}.
    \item[(viii)] $\SKKT$ has a graphical localization around $\big((\aba,\bb),(\xb,\yba)\big)$ which is single-valued and Lipschitz continuous and the necessary condition for variational sufficiency \eqref{EqNecVarSuff} holds.
  \end{enumerate}
\end{theorem}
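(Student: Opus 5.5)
The equivalences (i)$\Leftrightarrow$(ii)$\Leftrightarrow$(iii)$\Leftrightarrow$(iv)$\Leftrightarrow$(v)$\Leftrightarrow$(viii) are exactly the content of Theorem \ref{ThAubin2}, so only (vi) and (vii) need to be attached to this chain. I will prove (v)$\Rightarrow$(vi)$\Rightarrow$(vii)$\Rightarrow$(ii).

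The implication (v)$\Rightarrow$(vi) is trivial.

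For (vi)$\Rightarrow$(vii) there are two things to show.

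\emph{Tilt stability.} The basic constraint qualification \eqref{EqBQC} is assumed. As noted before Definition \ref{DefTiltStab}, full stability then implies tilt stability of $\xb$. Indeed, set $b=0$. For small $a^*$, any $x\in S^\delta_{\rm Opt}(a^*,0)$ has a multiplier by \eqref{EqBQC}, and this multiplier lies near $\yba$ by the boundedness/closedness argument from the first part of the proof of Theorem \ref{ThLipMult}. Hence $S^\delta_{\rm Opt}(a^*,0)$ is the projection of the single-valued Lipschitz map $\widehat S^\delta_{\rm Opt}(a^*,0)$.

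\emph{SOQC.} Full stability gives a single-valued Lipschitz localization of $\SKKT$. To see this I would reuse the argument in the proof of (v)$\Rightarrow$(vi) of Theorem \ref{ThAubin2}. That argument needs KKT points near $(\xb,\yba)$ to be local minimizers, which came from \cite[Theorem 3.4]{BeRo24} under variational sufficiency. This is the main obstacle. My first try is to invoke \cite[Theorem 3.4]{BeRo24} (or its converse direction), which characterizes full stability under the basic constraint qualification by variational sufficiency plus a Lipschitz condition. That yields (v) directly from (vi), and then Theorem \ref{ThAubin2} gives SOQC. Failing that, I would use the definition of full stability directly: it forces the multiplier in $\widehat S^\delta_{\rm Opt}$ to be Lipschitz in $(a^*,b)$. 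With $x$ frozen near $\xb$, this should give the localization property in Theorem \ref{ThSOQC}(vi) for $M_{\xb}$, hence SOQC.

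For (vii)$\Rightarrow$(ii), tilt stability is characterized by \eqref{EqVarStrConv2}. Because SOQC holds, the chain rule for the q-subderivative applies at $\xb$ for $\yba$, with $\xba=\nabla F(\xb)^T\yba$. Substituting it into \eqref{EqVarStrConv2} and using $\nabla^2 f(\xb)+\nabla^2\skalp{\yba,F}(\xb)=\nabla^2_{xx}\lag(\xb,\yba)$ turns \eqref{EqVarStrConv2} into exactly \eqref{EqCharVarSuff}. By \cite[Theorem 5]{Ro23a} this is strong variational sufficiency, so together with SOQC we obtain (ii). This closes the cycle.

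The main obstacle remains the step from (vi) to SOQC without presupposing variational sufficiency. I would first check whether the standing assumption \eqref{EqBQC} makes \cite{BeRo24}'s equivalence available directly, and otherwise use the multiplier-Lipschitz argument sketched above.
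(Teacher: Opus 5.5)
\paragraph{Overall assessment.}
Your overall cycle matches the paper's. Items (i)--(v) and (viii) come from Theorem \ref{ThAubin2}, (v)$\Rightarrow$(vi) is trivial, and your (vii)$\Rightarrow$(ii), via \eqref{EqVarStrConv2}, the chain rule and \eqref{EqCharVarSuff}, is exactly the paper's argument. There is, however, a genuine gap at the step you flagged yourself: getting SOQC from full stability. Neither of your two routes closes it.

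\paragraph{First route.}
It presumes that \cite{BeRo24} yields variational sufficiency from full stability under \eqref{EqBQC}. As used in the paper, \cite[Theorem 3.4]{BeRo24} goes the other way. Under variational sufficiency, KKT pairs near $(\xb,\yba)$ are uniformly local minimizers. It has no converse that produces variational sufficiency. This is precisely why (vi) is tied to (v) only through (vii) and the chain-rule hypothesis, rather than directly.

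\paragraph{Second route.}
It fails for the very reason Theorem 3.4 was needed in the first place. Take $y^*\in M_{\xb}(x^*,b)$ near $\yba$. Then $(\xb,y^*)$ is a KKT pair for $a^*=x^*-\xba$. But it belongs to $\widehat S^\delta_{\rm Opt}(a^*,b)$ only if $\xb$ minimizes $f-\skalp{a^*,\cdot}+g(F(\cdot)+b)$ over $\B(\xb,\delta)$, and nothing guarantees that. Full stability controls only the optimal pair for $(a^*,b)$, whose primal part may differ from $\xb$. So it gives no information on $M_{\xb}$ away from $(\xba,\bb)$.

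The missing ingredient is \cite[Theorem 1.5]{BeRo24}. Under \eqref{EqBQC}, full primal--dual stability implies its condition (c), which in this setting is exactly the strict-derivative form \eqref{EqSOQC_StrictDer} of SOQC. That is how the paper obtains SOQC in (vi)$\Rightarrow$(vii).

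\paragraph{Tilt-stability step.}
You invoke the first part of the proof of Theorem \ref{ThLipMult} under \eqref{EqBQC} alone, but that part uses SOQC, through Theorem \ref{ThSOQC}(vi), to force limiting multipliers to equal $\yba$. The condition \eqref{EqBQC} alone only gives boundedness. There are two fixes:
\begin{enumerate}
\item Establish SOQC first via \cite[Theorem 1.5]{BeRo24}; then your argument applies verbatim.
\item Alternatively, use that $\widehat S^\delta_{\rm Opt}(0,0)=\{(\xb,\yba)\}$ is a singleton and that $M_{\xb}(\xba,\bb)$ is convex. Together these force $M_{\xb}(\xba,\bb)=\{\yba\}$, which is all the closedness argument needs.
\end{enumerate}
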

 \begin{proof}In view of Theorem \ref{ThAubin2} it suffices to proof the equivalences (v)$\Leftrightarrow$(vi)$\Leftrightarrow$(vii).

 {\bf (v)$\Rightarrow$(vi): } This holds trivially true.

 {\bf (vi)$\Rightarrow$(vii): } SOQC follows from full primal-dual stability of $(\xb,\yba)$ by
 \cite[Theorem 1.5]{BeRo24}, since condition c) of
    \cite[Theorem 1.5]{BeRo24} amounts in our setting to \eqref{EqSOQC_StrictDer} defining SOQC. Tilt-stability of $\xb$ follows from the definition of full primal-dual stability.

 {\bf (vii)$\Rightarrow$(v): } (vii) implies (ii) because the characterization \eqref{EqVarStrConv2} for tilt-stability of $\xb$ together with the imposed chain rule for the q-subderivative implies the characterization \eqref{EqCharVarSuff} of strong variational sufficiency. The implication (ii)$\Rightarrow$(v) was already established in Theorem \ref{ThAubin2}.
 \end{proof}
 It is easy to see that the chain rule for the SC derivative \eqref{EqChainRuleSCD} implies the chain rule for the q-subderivative. Hence, whenever $\nabla F(\xb)$ has full row rank or $\Sp (\partial g)(\yb,\yba)$ is a singleton, it follows from Proposition \ref{PropChainRuleSCD} that the chain rule for the q-subderivative is valid.
 It is beyond the scope of the paper to investigate more advanced situations where the chain rules for the SC-derivative and q-subderivative, respectively, hold true.

 \section{Conclusion}
The paper has been devoted to a deep analysis of Lipschitzian stability of the KKT mapping $\SKKT$ in context of
a composite optimization problem. To this aim quite a broad arsenal of tools from modern variational analysis has been
employed which includes, apart from standard constructions, also the so-called quadratic bundles and SC generalized
derivatives. The main results characterize the investigated  stability by several equivalent statements  which are
valid under various circumstances. Concretely, one considers the following three situations when
\begin{enumerate}
\item[(i)] the SC derivative of $\partial g$ at the reference point is a singleton;
\item[(ii)] either a necessary or a sufficient condition related to strong variational sufficiency is fulfilled;
\item[(iii)] a special chain rule for the q-subderivatives of the composition $g\circ F$ is valid.
\end{enumerate}

\bibliographystyle{acm}
\bibliography{../../gfrerer}

\end{document}